\begin{document}
\setlength{\oddsidemargin}{0.35in}\setlength{\evensidemargin}{0.35in}  
\setlength{\topmargin}{-.5cm}
\newtheorem{theorem}{Theorem}[section]
\newtheorem{lemma}[theorem]{Lemma}
\newtheorem{proposition}[theorem]{Proposition}
\newtheorem{corollary}[theorem]{Corollary}
\newtheorem{definition}[theorem]{Definition}
\newtheorem{example}[theorem]{Example}
\newtheorem{exercise}[theorem]{Exercise}
\newtheorem{conclusion}[theorem]{Conclusion}
\newtheorem{conjecture}[theorem]{Conjecture}
\newtheorem{criterion}[theorem]{Criterion}
\newtheorem{summary}[theorem]{Summary} 
\newtheorem{axiom}[theorem]{Axiom}
\newtheorem{problem}[theorem]{Problem}
\newtheorem{remark}[theorem]{Remark}
\numberwithin{equation}{section}
\setcounter{page}{1}
\title{The Cauchy-Schwarz Inequality in Complex Normed Spaces}   
\author{VOLKER W. TH\"UREY \quad \\   
   Bremen,  Germany    \thanks{ 49 (0)421  591777, volker@thuerey.de \ . \quad     }   }
 \maketitle
 \begin{abstract}
     We introduce a product in all complex normed vector spaces, 
     which generalizes the inner product of complex inner product spaces. 
     Naturally the question occurs whether the Cauchy-Schwarz inequality is fulfilled.
     We provide a positive answer. This also yields a new proof of the
     Cauchy-Schwarz inequality in complex inner product spaces, which does not rely on the linearity of
     the inner product. The proof depends only on the norm in the vector space. 
     Further we present some properties of the generalized product.    
 \end{abstract}
     {\textit{Keywords and phrases}}: complex normed space, complex inner product space,
                                                   Cauchy-Schwarz inequality   \\ 
     {\textit{AMS subject classification}}:   46B99                                            
  	    \section{Introduction}      \ \ \  \ \ \ \   
  We deal with vector spaces $ X $ over the complex field ${\mathbb C}$, provided with a norm  $ \| \cdot \|$.
  	 As a motivation we  begin with the special case of an inner product space  
  	 $\left( X , <  \cdot \, | \, \cdot > \right) $. The inner product $ <  \cdot \, | \, \cdot > $ 
  	 generates a norm by $ \|\vec{x}\| =  \sqrt{ < \vec{x} \, | \, \vec{x} > } $, for all  $ \vec{x} \in X $. 
  	 By the same token it is well known that the inner  product can be expressed by this norm, namely for 
  	 $ \vec{x} , \vec{y} \in X $ we can write   
	 \begin{align}    \label{allererste definition} 
	       <\vec{x} \ | \ \vec{y}> \ = \  
         \frac{1}{4} \cdot \left[ \: \|\vec{x}+\vec{y}\|^{2} -   \|\vec{x}-\vec{y}\|^{2} \: 
         + {\mathbf{ i}} \cdot \left( \: \|\vec{x} + {\mathbf{ i}} \cdot \vec{y}\|^{2} -  
         \|\vec{x} - {\mathbf{ i}} \cdot \vec{y}\|^{2} \: \right) \: \right] \, , 
   \end{align}      
         where the symbol `${\mathbf{i}}$' means the imaginary unit. \\
    We use  an idea in \cite{Singer} to generate a continuous product in all complex normed vector spaces 
    $ (X, \| \cdot \|)$, which is just the inner product in the special case of a complex inner product space.      
   \begin{definition}   \label{die erste definition}         
               Let   $ \vec{x}, \vec{y}$ be two arbitrary elements 
               of $ X $. In the case of  \ $ \vec{x} = \vec{0} $  or  $ \vec{y} = \vec{0}  $  we set  
               \ $ < \vec{x} \: | \:  \vec{y} > \ := \, 0 $,  \text{and} if both  \
               $ \vec{x} , \vec{y} \neq \vec{0} \ ( \text{i.e.} \ \|\vec{x}\| \cdot \|\vec{y}\| > 0 \; )$ 
      \ we define the complex number   \\  \\
               \centerline { $ < \vec{x} \: | \:  \vec{y} >  \ :=  $  } 
        $$        \|\vec{x}\| \cdot \|\vec{y}\| \cdot  \frac{1}{4} \cdot  
                  \left[ \left\| \frac{\vec{x}}{\|\vec{x}\|}   +  \frac{\vec{y}}{\|\vec{y}\|} \right\|^{2}   - 
                  \left\| \frac{\vec{x}}{\|\vec{x}\|} - \frac{\vec{y}}{\|\vec{y}\|} \right\|^{2} +
                  {\mathbf{ i}} \cdot  \left( \left\| \frac{\vec{x}}{\|\vec{x}\|}  +  
                 {\mathbf{ i}} \cdot \frac{ \vec{y}}{\|\vec{y}\|} \right\|^{2} - \left\| \frac{\vec{x}}{\|\vec{x}\|} -
                 {\mathbf{ i}} \cdot \frac{ \vec{y}}{\|\vec{y}\|} \right\|^{2}  \right)  \right] \ .   $$
   \end{definition}                                                            
     It is easy to show that the product fulfills  the conjugate symmetry  ($ <\vec{x}  \: |  \: \vec{y}>$ 
        $ \ = \ \overline{<\vec{y}  \: |  \: \vec{x}>} $), where $\overline{<\vec{y}  \: |  \: \vec{x}>}$
        means the complex conjugate of  $ <\vec{y} \: | \: \vec{x}> $, the positive definiteness 
        ($ < \vec{x}  \: |  \: \vec{x} > \ \geq \ 0 , \
        \text{and} \ < \vec{x}  \: |  \: \vec{x} > \ = 0 \ \text{only for} \ \vec{x} = \vec{0} $), 
        and the homogeneity for real numbers 
        ($ < r \cdot \vec{x} \, | \, \vec{y} > \ =  r \ \cdot < \vec{x} \, | \, \vec{y} > \ = \ < \vec{x} \, | \,
         r \cdot \vec{y} > $),       
        and the homogeneity for pure imaginary numbers ($  < r \cdot {\mathbf{ i}} \cdot \vec{x} \, | \, \vec{y} > \ 
        = \ r \ \cdot {\mathbf{ i}} \,\cdot <  \vec{x} \, | \, \vec{y} > \
        = \ < \vec{x} \, | \, -r \cdot {\mathbf i} \cdot \vec{y} > $), \ for 
        $ \vec{x},  \vec{y} \in X , \ r \in {\mathbb R}$. \ Further, for $ \vec{x} \in X $ it holds \
        $ \|\vec{x}\| = \sqrt{< \vec{x} \: | \: \vec{x} >}$.    
        
        The product from Definition \ref{die erste definition} opens the possibility to define a generalized
        `angle' both in real normed spaces, see \cite{Thuerey2}, and in complex normed spaces, see
        \cite{Thuerey3}. In this paper we turn our focus on the product. 
        We prove the famous Cauchy-Schwarz-Bunjakowsky inequality, or briefly the Cauchy-Schwarz
        inequality. Further we notice some properties of the product.

 Let $ ( X, \| \cdot \| ) $  be an arbitrary complex normed vector space. 
       In Definition \ref{die erste definition} we defined a continuous product \ 
        $ < \cdot \: | \: \cdot > $ \ on $ X $. 
	     This is an  inner product in the case that the norm  $ \| \cdot \| $
	     generates this product by the equation of line \eqref{allererste definition}. 
                                                        
        Generally, for spaces $ X \neq \{\vec{0}\} $, the codomain of the product from 
        Definition \ref{die erste definition} is the entire complex plane ${\mathbb C}$, i.e. we have a surjective
        map \ $ <  \cdot \, | \, \cdot > : X^{2}  \rightarrow {\mathbb C} $.     
        If we restrict the domain of the product $ <  \cdot \, | \, \cdot > $ on unit vectors of $ X $,
        it is easy to see that the codomain changes into the `complex square' \ 
        $ \{ r + {\mathbf{ i}} \cdot s \in {\mathbb C} \ | \ -1 \leq r, s \leq +1 \} $.
        We can improve this statement: 
        Actually the codomain is the complex unit circle  
        $ \{ r + {\mathbf{ i}} \cdot s \in {\mathbb C} \ | r^{2} + s^{2} \leq 1 \} $. This is a consequence of 
        the Cauchy-Schwarz-Bunjakowsky inequality or `${\mathsf{ CSB }}$ inequality' . 
                          
        First we show that for a proof of this inequality we can restrict our research on the 
        two dimensional complex vector space $ {\mathbb C}^{2} $, provided with all possible norms. 
  \section{General Definitions and Properties} 
	            \ \ \  \ \ \ \       
    Let    \ $ (X , \| \cdot \|) $   be an  arbitrary complex vector space provided with a {\textit{ norm}} 
    $\| \cdot \|$,      this  means  that  there is a continuous  map  
         $\| \cdot \|$: \ $ X \longrightarrow$  ${\mathbb R}^{+} \cup \{ 0 \}$  which fulfills the following axioms \
              $\|z \cdot  \vec{x}\| \ =  |z| \cdot  \|\vec{x}\|$ \ (`absolute homogeneity'),   
              \ $\|\vec{x}+\vec{y}\| \leq  \|\vec{x}\|  + \|\vec{y}\| $ (`triangle \ inequality'), 
              and \ $\|\vec{x}\| = 0 $ only for $ \vec{x}  = \vec{0}$ (`positive definiteness'),
              \ for \  $\vec{x}, \vec{y} \in X$  and $ z \in {\mathbb C}$.  
 
        Let  $ < \cdot \:  | \: \cdot > \ : \  X^{2} \longrightarrow   {\mathbb C} $    be a  map  from  
        the product space  $ X \times X $   into the field   $ {\mathbb C} $. \ Such a map is called  a
	      {\textit{ product}}.  

      Assume that the complex vector space \ $X$ \ is provided with a norm   $ \| \cdot \|$, and further there is  
           a  product  $ < \cdot \: | \: \cdot >: X \times X \rightarrow {\mathbb C}  $. 
           We say that the triple    $( X , \| \cdot \| ,  < \cdot \: | \: \cdot > ) $   
           satisfies the {\textit{ Cauchy-Schwarz-Bunjakowsky Inequality}} or `${\mathsf{ CSB}}$ inequality',
           or briefly the {\textit{ Cauchy-Schwarz Inequality}}, 
           if and only if     for all $ \vec{x}, \vec{y} \in X $   there is the inequality \\  \\
           \centerline{ $ |< \vec{x} \: | \: \vec{y} >| \ \leq \ \|\vec{x}\| \cdot \|\vec{y}\| \; $.  } \\  
                                       
   It is well known that a complex normed space  $ ( X , \| \cdot \|) $, where the product of
       Definition \ref{die erste definition}  
       is actually an inner product, fulfills the ${\mathsf{ CSB}}$ inequality.
                                                                                           
  Let $ ( X, \| \cdot \| ) $  be an arbitrary complex normed vector space. 
       $ < \cdot \: | \: \cdot > $ \ on $ X $. In the introduction 
	     we already mentioned that the product of Definition \ref{die erste definition} is an inner product 
	     in the case that the norm  $ \| \cdot \| $
	     generates this product by the equation in line \eqref{allererste definition}. 
	  \begin{proposition}  \label{proposition drei} 
	      For all vectors \ $\vec{x}, \vec{y} \in (X, \| \cdot \|) $ and for real numbers \ $ r $ \ the product \ 
	      $ < \cdot \: | \: \cdot > $ \ of Definition \ref{die erste definition} has the following properties.  \\
	     $ { \mathrm{(a)}} \  < \vec{x} \: | \: \vec{y}>  \ = \ \overline{<\vec{y} \: | \: \vec{x}>} $ 
	                                                                        \hfill (conjugate symmetry), \\
	     $ { \mathrm{(b)}} \  <\vec{x} \: | \: \vec{x}> \ \ \geq \ 0 , \ \text{and}  \  
                       <\vec{x} \: | \: \vec{x}> \  =  0   \ \text{only for} \   \vec{x} = \vec{0}$ 
	                                                                    \hfill   (positive definiteness), \\
	     $ { \mathrm{(c)}} \  < r \cdot \vec{x} \, | \, \vec{y} > \ = \ r \, \cdot \, <  \vec{x} \, | \, \vec{y} > 
	      \ = \ < \vec{x} \, | \, r \cdot \vec{y} > \ $     \hfill  (homogeneity for real numbers), \\                     
       ${ \mathrm{(d)}} \   < r \cdot {\mathbf{ i}} \cdot \vec{x} \, | \, \vec{y} > \ = \ 
   r \cdot {\mathbf{ i}} \, \cdot <  \vec{x} \, | \, \vec{y} > \ = \
    < \vec{x} \, | \, -r \cdot {\mathbf{ i}} \cdot \vec{y} > $
	              \hfill   (homogeneity for pure imaginary  \ \  \\     {   $ { } $    }      \hfill    numbers),  \\
	      ${ \mathrm{(e)}} \  \| \vec{x} \| = \sqrt{ < \vec{x} \: | \: \vec{x}> }  $
	                                             \hfill   (the norm can be expressed by the product).      
     \end{proposition}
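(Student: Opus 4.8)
The plan is to reduce every assertion to the nonzero case of Definition \ref{die erste definition} by writing $\vec{u} = \vec{x}/\|\vec{x}\|$ and $\vec{v} = \vec{y}/\|\vec{y}\|$ for the normalized vectors and abbreviating the two brackets as $R(\vec{u},\vec{v}) = \frac{1}{4}(\|\vec{u}+\vec{v}\|^{2} - \|\vec{u}-\vec{v}\|^{2})$ and $I(\vec{u},\vec{v}) = \frac{1}{4}(\|\vec{u}+\mathbf{i}\vec{v}\|^{2} - \|\vec{u}-\mathbf{i}\vec{v}\|^{2})$, so that $<\vec{x}|\vec{y}> = \|\vec{x}\|\,\|\vec{y}\|\,(R(\vec{u},\vec{v}) + \mathbf{i}\,I(\vec{u},\vec{v}))$. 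Each claim then becomes bookkeeping about how $R$ and $I$ transform when an argument is multiplied by $-1$ or by $\mathbf{i}$, and the only tool required is absolute homogeneity $\|z\vec{w}\| = |z|\,\|\vec{w}\|$. The cases involving a zero vector are trivial, since both sides vanish, so I would dispose of them first and assume $\vec{x},\vec{y}\neq\vec{0}$ throughout.

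I would begin with (e) and (b), which are immediate: taking $\vec{u}=\vec{v}$ gives $R(\vec{u},\vec{u}) = \frac{1}{4}\|2\vec{u}\|^{2} = \|\vec{u}\|^{2} = 1$ and, using $\|(1\pm\mathbf{i})\vec{u}\| = \sqrt{2}\,\|\vec{u}\|$, one finds $I(\vec{u},\vec{u}) = 0$; hence $<\vec{x}|\vec{x}> = \|\vec{x}\|^{2}$, which yields (e) and shows the value is real, nonnegative, and zero only for $\vec{x}=\vec{0}$, i.e. (b). For (a) I would check that $R$ is symmetric and $I$ anti-symmetric under interchange of arguments: $\|\vec{v}+\vec{u}\| = \|\vec{u}+\vec{v}\|$ and $\|\vec{v}-\vec{u}\| = \|{-}(\vec{u}-\vec{v})\| = \|\vec{u}-\vec{v}\|$ give $R(\vec{v},\vec{u}) = R(\vec{u},\vec{v})$, while the identities $\vec{v}+\mathbf{i}\vec{u} = \mathbf{i}(\vec{u}-\mathbf{i}\vec{v})$ and $\vec{v}-\mathbf{i}\vec{u} = -\mathbf{i}(\vec{u}+\mathbf{i}\vec{v})$ together with $|\mathbf{i}|=1$ give $I(\vec{v},\vec{u}) = -I(\vec{u},\vec{v})$; thus $\overline{<\vec{y}|\vec{x}>} = <\vec{x}|\vec{y}>$.

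For (c) I would argue by the sign of $r$. If $r>0$ the normalized vector $r\vec{x}/\|r\vec{x}\|$ equals $\vec{u}$ unchanged while the prefactor picks up $\|r\vec{x}\| = r\|\vec{x}\|$, producing the factor $r$ directly; if $r<0$ the normalized vector becomes $-\vec{u}$, and one checks $R(-\vec{u},\vec{v}) = -R(\vec{u},\vec{v})$ and $I(-\vec{u},\vec{v}) = -I(\vec{u},\vec{v})$, so the sign changes from $\|r\vec{x}\| = -r\|\vec{x}\|$ and from the bracket cancel to give $r<\vec{x}|\vec{y}>$; the case $r=0$ is trivial, and the identity with $r$ on the right follows the same way.

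The delicate part, and the one I expect to demand the most care, is (d), because pulling a factor $\mathbf{i}$ through the norm mixes the two brackets. By (c) applied to the vector $\mathbf{i}\vec{x}$ it suffices to treat $r=1$, i.e. to show $<\mathbf{i}\vec{x}|\vec{y}> = \mathbf{i}<\vec{x}|\vec{y}>$. Here the normalized vector of $\mathbf{i}\vec{x}$ is $\mathbf{i}\vec{u}$, and the key identities $\mathbf{i}\vec{u}+\vec{v} = \mathbf{i}(\vec{u}-\mathbf{i}\vec{v})$ and $\mathbf{i}\vec{u}-\vec{v} = \mathbf{i}(\vec{u}+\mathbf{i}\vec{v})$ give $R(\mathbf{i}\vec{u},\vec{v}) = -I(\vec{u},\vec{v})$, while $\|\mathbf{i}\vec{u}\pm\mathbf{i}\vec{v}\| = \|\vec{u}\pm\vec{v}\|$ gives $I(\mathbf{i}\vec{u},\vec{v}) = R(\vec{u},\vec{v})$; hence the bracket becomes $-I + \mathbf{i}R = \mathbf{i}(R+\mathbf{i}I)$, exactly multiplication by $\mathbf{i}$. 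The remaining equality $<\vec{x}|{-}r\mathbf{i}\vec{y}> = r\mathbf{i}<\vec{x}|\vec{y}>$ I would deduce from (a): rewrite it as $\overline{<{-}r\mathbf{i}\vec{y}|\vec{x}>}$, apply the left-hand version just proved, and conjugate, using $\overline{({-}r)\mathbf{i}} = r\mathbf{i}$. The whole proposition thus rests on the elementary fact that multiplication by $\mathbf{i}$ is an isometry permuting the four norm terms of the definition in a controlled way.
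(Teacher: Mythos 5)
Your proof is correct, and it is essentially a fully worked-out version of the argument the paper merely sketches (the paper reduces (c) to the identity $<-\vec{x}\,|\,\vec{y}> = -<\vec{x}\,|\,\vec{y}> = <\vec{x}\,|\,-\vec{y}>$ and declares (a), (b), (d), (e) easy or similar, which is exactly the bookkeeping on your $R$ and $I$ under sign flips and multiplication by $\mathbf{i}$). All of your identities check out, including the key relations $R(\mathbf{i}\vec{u},\vec{v}) = -I(\vec{u},\vec{v})$ and $I(\mathbf{i}\vec{u},\vec{v}) = R(\vec{u},\vec{v})$ for part (d).
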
 
       \begin{proof}    
           We use Definition \ref{die erste definition}, and the proofs for ${ \mathrm{(a)}}$ and 
            ${ \mathrm{(b)}} $ are easy.
           For positive $r \in {\mathbb R} $ the point  ${ \mathrm{(c)}}$ is trivial.   We can prove 
         $  < - \vec{x} | \vec{y} >  = - <  \vec{x} | \vec{y} >  = <  \vec{x} | -\vec{y} > $,
           and   ${ \mathrm{(c)}}$ follows immediately.  The point  ${ \mathrm{(d)}}$ is similar to 
            ${ \mathrm{(c)}}$, and   ${ \mathrm{(e)}}$ is clear. 
       \end{proof}
    \begin{lemma}   \label{lemma eins}
        For a pair $ \vec{x}, \vec{y} \in X $ of unit vectors, i.e. $  \| \vec{x} \| = 1 = \| \vec{y} \| $,  
        it holds that both the real part and the imaginary part of $ <\vec{x} \: | \: \vec{y}> $ \
        are in the interval $ [ -1, 1 ] $.  
    \end{lemma}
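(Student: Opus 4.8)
The plan is to specialize Definition~\ref{die erste definition} to the case $\|\vec{x}\| = \|\vec{y}\| = 1$. Then every normalizing factor $\|\vec{x}\|$ and $\|\vec{y}\|$ equals $1$, so the product collapses to
\[
   <\vec{x} \: | \: \vec{y}> \ = \ \frac{1}{4} \left[ \|\vec{x}+\vec{y}\|^{2} - \|\vec{x}-\vec{y}\|^{2} + {\mathbf{i}} \left( \|\vec{x}+{\mathbf{i}}\cdot\vec{y}\|^{2} - \|\vec{x}-{\mathbf{i}}\cdot\vec{y}\|^{2} \right) \right] .
\]
Its real part is $\frac{1}{4}\left(\|\vec{x}+\vec{y}\|^{2} - \|\vec{x}-\vec{y}\|^{2}\right)$ and its imaginary part is $\frac{1}{4}\left(\|\vec{x}+{\mathbf{i}}\cdot\vec{y}\|^{2} - \|\vec{x}-{\mathbf{i}}\cdot\vec{y}\|^{2}\right)$, and I would estimate each of these four squared norms separately.

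First I would bound the real part from above and below. The triangle inequality gives $\|\vec{x}+\vec{y}\| \leq \|\vec{x}\| + \|\vec{y}\| = 2$, hence $\|\vec{x}+\vec{y}\|^{2} \leq 4$, while positive definiteness of the norm gives $\|\vec{x}-\vec{y}\|^{2} \geq 0$; together these two estimates yield a real part of at most $\frac{1}{4}(4-0) = 1$. Interchanging the roles of the two summands --- now using $\|\vec{x}-\vec{y}\| \leq 2$ together with $\|\vec{x}+\vec{y}\|^{2} \geq 0$ --- gives a real part of at least $\frac{1}{4}(0-4) = -1$. Hence the real part lies in $[-1,1]$.

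For the imaginary part I would repeat exactly the same two-sided estimate, the only extra ingredient being the absolute homogeneity of the norm with $|{\mathbf{i}}| = 1$, which gives $\|{\mathbf{i}}\cdot\vec{y}\| = \|\vec{y}\| = 1$. Then $\|\vec{x} \pm {\mathbf{i}}\cdot\vec{y}\| \leq \|\vec{x}\| + \|{\mathbf{i}}\cdot\vec{y}\| = 2$, and the argument confining the imaginary part to $[-1,1]$ is word for word the same as for the real part.

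Since each of the four one-sided bounds is a single application of either the triangle inequality or the non-negativity of the norm, I expect no genuine obstacle here. The only point requiring care is bookkeeping: for each bound one must apply the triangle inequality to the summand that is to be bounded above and non-negativity to the summand that is to be bounded below, and one must remember to invoke $|{\mathbf{i}}| = 1$ so that the imaginary part reduces to the real-part computation.
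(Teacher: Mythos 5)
Your argument is correct and is exactly the intended one: the paper's proof consists of the single remark that the lemma follows easily from the triangle inequality, and your two-sided estimates (triangle inequality for the upper bound on one squared norm, non-negativity for the other, plus $|{\mathbf{i}}|=1$ for the imaginary part) are the standard way to carry that out. No gaps.
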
      
    \begin{proof} 
         The lemma can be proven easily with the triangle inequality.
    \end{proof}   
    \begin{corollary} \label{corollary always angle}
        Lemma \ref{lemma eins} means, that \
        $ \left\{ <\vec{x} \: | \: \vec{y}>  \ | \ \vec{x},\vec{y} \in X, \ \| \vec{x} \| = 1 = \| \vec{y} \|
        \right\}$  is a subset of the `complex square' \   
        $ \{ r + {\mathbf{ i}} \cdot s \in {\mathbb C} \ | \ -1 \leq r, s \leq +1 \} $. \
        Immediately, it follows for unit vectors $ \vec{x}, \vec{y} $ the estimate \ 
        $ \left| <\vec{x} \: | \: \vec{y}> \right| \ \leq \ \sqrt{2}$. 
    \end{corollary}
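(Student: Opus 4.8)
The plan is to derive both assertions of Corollary \ref{corollary always angle} directly from Lemma \ref{lemma eins}, since the corollary is essentially a geometric reformulation of that lemma together with one elementary estimate. No new analytic input about the norm is required beyond what Lemma \ref{lemma eins} already supplies.

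First I would establish the set inclusion. Fix unit vectors $\vec{x}, \vec{y} \in X$ with $\|\vec{x}\| = 1 = \|\vec{y}\|$ and write $<\vec{x} \: | \: \vec{y}> \, = r + {\mathbf{i}} \cdot s$ with $r = \mathrm{Re}\,<\vec{x} \: | \: \vec{y}>$ and $s = \mathrm{Im}\,<\vec{x} \: | \: \vec{y}>$. Lemma \ref{lemma eins} states precisely that $r \in [-1,1]$ and $s \in [-1,1]$. By the definition of the `complex square' $\{ r + {\mathbf{i}} \cdot s \in {\mathbb C} \ | \ -1 \leq r, s \leq +1 \}$, the point $<\vec{x} \: | \: \vec{y}>$ therefore lies in this square. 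Since $\vec{x}, \vec{y}$ were arbitrary unit vectors, the whole set $\{ <\vec{x} \: | \: \vec{y}> \ | \ \vec{x},\vec{y} \in X, \ \|\vec{x}\| = 1 = \|\vec{y}\| \}$ is contained in the complex square, which is the first claim.

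Next I would obtain the norm estimate purely from the coordinate bounds. For the same decomposition $<\vec{x} \: | \: \vec{y}> \, = r + {\mathbf{i}} \cdot s$, the modulus satisfies $|<\vec{x} \: | \: \vec{y}>|^{2} = r^{2} + s^{2}$. Using $r^{2} \leq 1$ and $s^{2} \leq 1$, which follow from $|r|, |s| \leq 1$, I get $r^{2} + s^{2} \leq 2$, and hence $|<\vec{x} \: | \: \vec{y}>| \leq \sqrt{2}$. This completes the second claim.

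I do not expect any genuine obstacle here: the corollary is a one-line consequence of Lemma \ref{lemma eins}, and the only point worth noting is that the bound $\sqrt{2}$ is attained exactly at the four corners $\pm 1 \pm {\mathbf{i}}$ of the complex square, so it is the best estimate obtainable from the square alone. The sharper bound $|<\vec{x} \: | \: \vec{y}>| \leq 1$ — equivalently the ${\mathsf{CSB}}$ inequality, which shrinks the admissible region from the square to the unit disk — cannot be reached by this elementary argument and requires the genuine work carried out later in the paper.
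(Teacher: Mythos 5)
Your proposal is correct and follows exactly the route the paper intends: the corollary is stated in the paper as an immediate consequence of Lemma \ref{lemma eins}, with the inclusion in the complex square coming from the bounds $-1 \leq r, s \leq 1$ on the real and imaginary parts, and the estimate $\left| <\vec{x} \: | \: \vec{y}> \right| = \sqrt{r^{2}+s^{2}} \leq \sqrt{2}$ following at once. Your added remark that $\sqrt{2}$ is the best bound obtainable from the square alone, while the sharper bound $1$ requires the ${\mathsf{CSB}}$ inequality proved later, matches the paper's own framing.
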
 
   Now we notice a few facts about the general product $ < \cdot \, | \, \cdot > $ 
   from Definition \ref{die erste definition}.                                
   \begin{lemma} 
      In a complex normed space  $ (X, \| \cdot \|) $ for $ \vec{x}, \vec{y} \in X$ and real $ \varphi $ 
      there are identities   
   $$    < e^{{\mathbf{ i}} \cdot \varphi} \cdot \vec{x} \, | \, \vec{x} > \ = \
           e^{{\mathbf{ i}} \, \cdot \varphi} \cdot   < \vec{ x} \, | \, \vec{x} > \ , \ \text{and} \                            < e^{{\mathbf i} \cdot \varphi} \cdot \vec{x} \, | \,  e^{{\mathbf i} \cdot \varphi} \cdot \vec{y} > 
                     \ = \ < \vec{ x} \, | \, \vec{y} >   \  .                                                  $$
   \end{lemma}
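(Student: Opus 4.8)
The plan is to verify both identities by unwinding Definition~\ref{die erste definition} and exploiting the absolute homogeneity of the norm, namely $\|z\cdot\vec{w}\| = |z|\cdot\|\vec{w}\|$. Both sides of each identity vanish when $\vec{x}=\vec{0}$ or $\vec{y}=\vec{0}$, so throughout I assume $\vec{x},\vec{y}\neq\vec{0}$ and write $\vec{u}:=\vec{x}/\|\vec{x}\|$ and $\vec{v}:=\vec{y}/\|\vec{y}\|$ for the normalized vectors. The central observation is that multiplication by $e^{\mathbf{i}\cdot\varphi}$ leaves every norm in the definition unchanged, because $|e^{\mathbf{i}\cdot\varphi}|=1$; the only work is bookkeeping of how this factor interacts with the normalizations.

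For the second identity I would first note that $\|e^{\mathbf{i}\cdot\varphi}\cdot\vec{x}\|=\|\vec{x}\|$ and $\|e^{\mathbf{i}\cdot\varphi}\cdot\vec{y}\|=\|\vec{y}\|$, so the normalized vectors of $e^{\mathbf{i}\cdot\varphi}\cdot\vec{x}$ and $e^{\mathbf{i}\cdot\varphi}\cdot\vec{y}$ are $e^{\mathbf{i}\cdot\varphi}\cdot\vec{u}$ and $e^{\mathbf{i}\cdot\varphi}\cdot\vec{v}$. Each of the four norm terms in $<e^{\mathbf{i}\cdot\varphi}\cdot\vec{x}\,|\,e^{\mathbf{i}\cdot\varphi}\cdot\vec{y}>$ then has the shape $\|e^{\mathbf{i}\cdot\varphi}\cdot(\vec{u}\pm\vec{v})\|$ or $\|e^{\mathbf{i}\cdot\varphi}\cdot(\vec{u}\pm\mathbf{i}\cdot\vec{v})\|$, which by absolute homogeneity equals $\|\vec{u}\pm\vec{v}\|$ or $\|\vec{u}\pm\mathbf{i}\cdot\vec{v}\|$ respectively. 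Since the prefactor $\|\vec{x}\|\cdot\|\vec{y}\|$ is also unchanged, the expression agrees term by term with $<\vec{x}\,|\,\vec{y}>$, which proves the claim.

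For the first identity the scalar sits on only one argument, so I would compute directly. The normalization of $e^{\mathbf{i}\cdot\varphi}\cdot\vec{x}$ is $e^{\mathbf{i}\cdot\varphi}\cdot\vec{u}$, while the second argument normalizes to $\vec{u}$; hence each norm term is a norm of a complex multiple of the single unit vector $\vec{u}$, and absolute homogeneity collapses it to the modulus of a complex scalar, e.g. $\|e^{\mathbf{i}\cdot\varphi}\cdot\vec{u}+\vec{u}\|=|e^{\mathbf{i}\cdot\varphi}+1|$. A short computation then gives $|e^{\mathbf{i}\cdot\varphi}+1|^{2}-|e^{\mathbf{i}\cdot\varphi}-1|^{2}=4\cos\varphi$ and $|e^{\mathbf{i}\cdot\varphi}+\mathbf{i}|^{2}-|e^{\mathbf{i}\cdot\varphi}-\mathbf{i}|^{2}=4\sin\varphi$, so the bracket in Definition~\ref{die erste definition} evaluates to $4\cos\varphi+\mathbf{i}\cdot 4\sin\varphi=4\cdot e^{\mathbf{i}\cdot\varphi}$. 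Multiplying by $\|\vec{x}\|^{2}/4$ and invoking $<\vec{x}\,|\,\vec{x}>=\|\vec{x}\|^{2}$ from part (e) of Proposition~\ref{proposition drei} yields $<e^{\mathbf{i}\cdot\varphi}\cdot\vec{x}\,|\,\vec{x}>=\|\vec{x}\|^{2}\cdot e^{\mathbf{i}\cdot\varphi}=e^{\mathbf{i}\cdot\varphi}\cdot<\vec{x}\,|\,\vec{x}>$.

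I do not expect a genuine obstacle: the whole lemma reduces to the remark that the vectors entering each norm are scalar multiples of one fixed unit vector, so that every norm becomes the modulus of a complex number and the problem turns into elementary complex arithmetic. The only point worth flagging is that one cannot shortcut the first identity by appealing to linearity in the first slot, since the product of Definition~\ref{die erste definition} is in general not additive; hence the explicit evaluation of the four moduli, rather than an appeal to the homogeneity properties (c) and (d) alone, is what actually carries the argument.
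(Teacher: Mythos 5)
Your proof is correct and follows essentially the same route as the paper's (very terse) argument: reduce to unit vectors, use absolute homogeneity to turn each norm term into the modulus of a complex scalar, and compute $4\cos\varphi+\mathbf{i}\cdot 4\sin\varphi=4\cdot e^{\mathbf{i}\cdot\varphi}$ for the first identity, while the second follows term by term from $|e^{\mathbf{i}\cdot\varphi}|=1$. Your write-up is in fact more complete than the paper's, which only sketches these steps.
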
 
   \begin{proof} To prove the first equation take an unit vector $ \vec{x} $, and write \ 
         $ e^{{\mathbf i} \, \cdot \varphi} = \cos(\varphi) + {\mathbf i} \, \cdot \sin(\varphi) $, 
         and use Definition \ref{die erste definition}. The second identity comes directly from
         Defintion \ref{die erste definition}.    
   \end{proof}  
   \begin{corollary} For an unit vector $ \vec{x} \in X $ we have that the set
       $ \{ < e^{{\mathbf{ i}} \cdot \varphi} \cdot \vec{x} \, | \, \vec{x} > \ | \  \varphi \in [ 0, 2 \, \pi ] \} $
       is the complex unit circle, since \ 
       $ < e^{{\mathbf{ i}} \cdot \varphi} \cdot \vec{x} \, | \, \vec{x} > \ = \
       e^{{\mathbf{ i}}\ \cdot \varphi} \cdot  < \vec{x} \, | \, \vec{x} > \ = \ e^{{\mathbf{ i}} \cdot \varphi} $. 
   \end{corollary}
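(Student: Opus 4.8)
The plan is to deduce the corollary directly from the first identity of the preceding Lemma together with the normalization of the unit vector $ \vec{x} $. The statement is essentially an instance of that Lemma once we evaluate the scalar $ < \vec{x} \: | \: \vec{x} > $, so the work reduces to a single substitution plus the observation that $ \varphi \mapsto e^{{\mathbf{i}} \cdot \varphi} $ parametrizes the unit circle.

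First I would recall that the Lemma gives, for every real $ \varphi $, the identity $ < e^{{\mathbf{i}} \cdot \varphi} \cdot \vec{x} \: | \: \vec{x} > \ = \ e^{{\mathbf{i}} \cdot \varphi} \cdot < \vec{x} \: | \: \vec{x} > $. It therefore remains only to compute the factor $ < \vec{x} \: | \: \vec{x} > $ for a unit vector. To this end I would invoke part $ {\mathrm{(e)}} $ of Proposition \ref{proposition drei}, namely $ \| \vec{x} \| = \sqrt{ < \vec{x} \: | \: \vec{x} > } $. Squaring and using $ \| \vec{x} \| = 1 $ yields $ < \vec{x} \: | \: \vec{x} > \ = \ \| \vec{x} \|^{2} = 1 $. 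Substituting this into the Lemma's identity gives $ < e^{{\mathbf{i}} \cdot \varphi} \cdot \vec{x} \: | \: \vec{x} > \ = \ e^{{\mathbf{i}} \cdot \varphi} $, exactly the displayed chain of equalities.

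Finally, I would observe that as $ \varphi $ ranges over $ [ 0, 2 \pi ] $ the map $ \varphi \mapsto e^{{\mathbf{i}} \cdot \varphi} = \cos(\varphi) + {\mathbf{i}} \cdot \sin(\varphi) $ surjects onto $ \{ z \in {\mathbb C} \ | \ |z| = 1 \} $, the complex unit circle; hence the indicated set coincides with the unit circle. I do not expect any genuine obstacle here: every ingredient is already available from the preceding Lemma and from Proposition \ref{proposition drei}, and the only point requiring attention is the correct evaluation $ < \vec{x} \: | \: \vec{x} > \ = 1 $, which is precisely what fixes the radius of the resulting circle.
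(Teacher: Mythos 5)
Your proposal is correct and follows exactly the route the paper itself indicates in the "since" clause of the corollary: apply the first identity of the preceding lemma, evaluate $<\vec{x}\,|\,\vec{x}> \ = \|\vec{x}\|^{2} = 1$ via Proposition \ref{proposition drei}(e), and note that $\varphi \mapsto e^{{\mathbf{i}}\cdot\varphi}$ parametrizes the unit circle. No discrepancy to report.
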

     
        The next example shows that in a complex normed space  $ (X, \| \cdot \|) $ 
        generally we have the inequality \ $ < e^{{\mathbf{ i}} \cdot \varphi} \cdot \vec{x} \, | \, \vec{y} > \ 
         \neq \               e^{{\mathbf{ i}} \, \cdot \varphi} \cdot   < \vec{x} \, | \, \vec{y} > $. 
       This statement seems to be `probable', but we need an example, which we yield in the proof of the 
       following lemma. 
                           
    This inequality means, that the set of products \
 $ \left\{ < e^{{\mathbf{ i}} \cdot \varphi} \cdot \vec{x} \, | \, \vec{y} > \ | \ \varphi \in [ 0, 2 \pi ] \right\}$
        \ commonly does not generate a proper Euclidean circle (with radius $  
        \ |< \vec{x} \, | \, \vec{y} >| $) in $ {\mathbb C} $. 
                   
    If we take $ \varphi \in \{ \pi, \pi / 2, $ $ \pi \cdot 3 / 2 \} $, however, we get with  
        Proposition \ref{proposition drei}  three identities  \\
        \centerline{$ < - \vec{x} \, | \, \vec{y} > \ = \ - < \vec{x} \, | \, \vec{y} > \ , \ 
        < {\mathbf{ i}} \cdot \vec{x} \, | \, \vec{y} > \ = \
          {\mathbf{ i}} \, \cdot  < \vec{x} \, | \, \vec{y} > \ , \  \text{and}  \
   < -{\mathbf{ i}} \cdot \vec{x} \, | \, \vec{y} > \ = \ -{\mathbf{ i}} \ \cdot  < \vec{x} \, | \, \vec{y} > $.}            \begin{lemma} 
    In a complex normed space  $ (X, \| \cdot \|) $ generally it holds the inequality   \\
      \centerline {  $ < e^{{\mathbf{ i}} \cdot \varphi} \cdot \vec{x} \, | \, \vec{y} > \ \neq \
                     e^{{\mathbf{ i}} \, \cdot \varphi} \cdot   < \vec{x} \, | \, \vec{y} > $, \
                       even their moduli are different.       }   
   \end{lemma}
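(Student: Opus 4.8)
The statement asserts that the homogeneity relation $<e^{{\mathbf i}\varphi}\cdot\vec x\,|\,\vec y> = e^{{\mathbf i}\varphi}\cdot<\vec x\,|\,\vec y>$ \emph{fails} for a general norm, so the plan is simply to exhibit one explicit counterexample in which not only the two products but even their moduli disagree. Two observations guide the search. If the norm stems from an inner product, the product of Definition~\ref{die erste definition} is sesquilinear and the relation holds, so the example must use a genuinely non-Euclidean norm. Moreover, by Proposition~\ref{proposition drei}(d) the relation already holds for $\varphi\in\{\pi,\pi/2,3\pi/2\}$; hence I would pick a value off this list, say $\varphi=\pi/4$, for which $e^{{\mathbf i}\varphi}=(1+{\mathbf i})/\sqrt2=:\omega$.

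Concretely I would work in $X={\mathbb C}^2$ equipped with the maximum norm $\|(z_1,z_2)\|=\max\{|z_1|,|z_2|\}$, and take the unit vectors $\vec x=(1,0)$ and $\vec y=(1,1)$. First I compute the right-hand side. From $\vec x+\vec y=(2,1)$, $\vec x-\vec y=(0,-1)$ and $\vec x\pm{\mathbf i}\cdot\vec y=(1\pm{\mathbf i},\pm{\mathbf i})$ one reads off the norms $2,1,\sqrt2,\sqrt2$, so Definition~\ref{die erste definition} gives $<\vec x\,|\,\vec y>=\tfrac14[\,4-1+{\mathbf i}(2-2)\,]=\tfrac34$, and therefore $e^{{\mathbf i}\varphi}\cdot<\vec x\,|\,\vec y>=\tfrac34\,\omega$, of modulus $\tfrac34$.

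For the left-hand side I replace $\vec x$ by $\omega\cdot\vec x=(\omega,0)$, again a unit vector. The four vectors entering Definition~\ref{die erste definition} are $(\omega\pm1,\pm1)$ and $(\omega\pm{\mathbf i},\pm{\mathbf i})$, and the computation rests on the elementary identity $|\omega\pm1|^2=|\omega\pm{\mathbf i}|^2=2\pm\sqrt2$. Since $2-\sqrt2<1<2+\sqrt2$, checking in each case which coordinate realises the maximum gives the squared norms $2+\sqrt2,\,1,\,2+\sqrt2,\,1$, whence $<\omega\cdot\vec x\,|\,\vec y>=\tfrac{1+\sqrt2}{4}(1+{\mathbf i})$, of modulus $\tfrac{2+\sqrt2}{4}\approx0.854$. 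As $\tfrac{2+\sqrt2}{4}\neq\tfrac34$, both the products and their moduli differ, which proves the claim.

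The one place demanding care is the bookkeeping of the complex moduli inside the maximum norm: for each of the eight vectors one must verify which of the two coordinates dominates, since an incorrect maximum would silently corrupt the result. No deeper idea is needed beyond choosing an \emph{asymmetric} pair of unit vectors, so that the real and imaginary defect terms do not cancel and the discrepancy survives at the level of moduli.
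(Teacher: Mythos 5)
Your proposal is correct and takes essentially the same approach as the paper: both exhibit an explicit counterexample in $\left({\mathbb C}^{2}, \| \cdot \|_{\infty}\right)$ by direct computation of the two products from Definition~\ref{die erste definition}. Your choice of $\vec{x}=(1,0)$, $\vec{y}=(1,1)$ and $\varphi=\pi/4$ is considerably simpler than the paper's vectors involving $\sqrt{15}$ and $\sqrt{7}$, and it yields the exact moduli $3/4$ versus $(2+\sqrt{2})/4$ rather than decimal approximations, so the verification is cleaner; all of your norm evaluations check out.
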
 
   \begin{proof}  
      We use the most simple non-trivial example of a complex normed space, let \ 
      $ ( X, \| \cdot \|) := $ $ \left( {\mathbb C}^{2} , \| \cdot \|_{\infty} \right) $, where for two complex 
      numbers \ $ r + {\mathbf{ i}} \cdot s \, , \  v + {\mathbf{ i}} \cdot w \in  {\mathbb C} $ \ 
      we have its norm $ \| \cdot \|_{\infty} $ by   
 $$  \left\| \left(  \begin{array}{c} r + {\mathbf{ i}} \cdot s  \\ v + {\mathbf{ i}} \cdot w  \end{array}  \right)  
             \right\|_{\infty} \ = \ \max \left\{ \sqrt{r^{2}+s^{2}} , \sqrt{v^{2}+w^{2}} \right\} \ .   $$ 
      The following calculations are easy, but tiring. We define two unit vectors \ $ \vec{x}, \vec{y} $ \ of \ 
      $ \left(  {\mathbb C}^{2} , \| \cdot \|_{\infty} \right) $, 
  $$   \vec{x} \ := \ \frac{1}{4} \cdot 
          \left( \begin{array}{cr} 1 + {\mathbf{ i}} \cdot \sqrt{15} \\ 2 + {\mathbf{ i}} \cdot 2 \end{array} 
           \right)        \ \quad \text{and} \ \ \   
           \vec{y} \ := \ \frac{1}{4} \cdot 
     \left( \begin{array}{c} 2 + {\mathbf{ i}} \\ 3 + {\mathbf{ i}} \cdot \sqrt{7} \end{array}  \right) \ .       $$
       Some calculations yield the complex number 
    $$ < \vec{x} \, | \, \vec{y} > \ = \ 
             \frac{1}{64} \cdot \left( \ 19 + 4 \cdot \sqrt{7} + 2 \cdot \sqrt{15}
             + {\mathbf{ i}} \cdot \left[ 7 - 4 \cdot \sqrt{7} + 4 \cdot \sqrt{15} \right] \ \right) \ 
             \approx \ 0.583 + {\mathbf{ i}} \cdot 0.186  \ . $$           
    We choose \ $ e^{{\mathbf{ i}} \cdot \varphi} := 1/2 \cdot \left( 1 +  {\mathbf{ i}} \cdot \sqrt{3}\right) $ \ 
        from the complex unit circle, and we get approximately \ 
        $  e^{{\mathbf{ i}} \, \cdot \, \varphi} \, \cdot < \vec{x} \, | \, \vec{y} > \ \approx \ 
        0.130 + {\mathbf{ i}} \cdot 0.598 . $  \ \ After that we take the unit vector 
    \begin{align*}       
       &  e^{{\mathbf{ i}} \cdot \varphi} \cdot \vec{x} \, = \, \frac{1}{8} \cdot 
          \left( \begin{array}{c} 1 - \sqrt{45} + {\mathbf{ i}} \cdot \left[ \sqrt{3} + \sqrt{15} \right]   \\
          2 - 2 \cdot \sqrt{3} + {\mathbf{ i}} \cdot \left[ 2 + 2 \cdot  \sqrt{3} \right] \end{array}  \right) , \   
          \text{and we compute the product} \ < e^{{\mathbf{ i}} \cdot \varphi} \cdot \vec{x} \, | \, \vec{y} > ,  \\ 
       &  < e^{{\mathbf{ i}} \cdot \varphi} \cdot \vec{x} \, | \, \vec{y} > \  = \ 
          \left( p \ + {\mathbf{ i}} \cdot q \right)/ 64 \ \approx \ 0.113 + {\mathbf{ i}} \cdot 0.628 , \
          \text{where} \ p \ \text{and} \ q \ \text{abbreviate real numbers}   
    \end{align*}     
   $$      p  =  11 + 2 \cdot \left( \sqrt{7} + \sqrt{21} - \sqrt{45} \right) - 5 \cdot \sqrt{3} + \sqrt{15} ,
       \ \ q   =  8 + 2 \cdot \left( 4 \cdot \sqrt{3} - \sqrt{7} + \sqrt{15} + \sqrt{21} \right) + \sqrt{45} \ . $$
      This  proves the inequality \
        $ < e^{{\mathbf{ i}} \cdot \varphi} \cdot \vec{x} \, | \, \vec{y} > \ \neq \
          e^{{\mathbf{ i}} \, \cdot \varphi} \cdot   < \vec{x} \, | \, \vec{y} > $, \ and the lemma is confirmed.    
   \end{proof} 
    The above lemma suggests the following conjecture. One direction is trivial.
    \begin{conjecture}
          In a complex normed space  $ (X, \| \cdot \|) $  for all \ 
          $ \vec{x}, \vec{y} \in X , \, \varphi \in {\mathbb R} $, it holds \  
      \centerline {  $ < e^{{\mathbf{ i}} \cdot \varphi} \cdot \vec{x} \, | \, \vec{y} > \ = \
                     e^{{\mathbf{ i}} \, \cdot \varphi} \, \cdot \,  < \vec{x} \, | \, \vec{y} > $  }  \\
                     if and only if its product  $ < \cdot \; | \; \cdot >  $ from
          Definition \ref{die erste definition} is actually an inner product, i.e. 
          $ (X,  < \cdot \; | \; \cdot >) $ is an inner product space.  
    \end{conjecture}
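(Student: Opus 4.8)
The plan is to prove both directions, with the reverse implication carrying all the weight. For the trivial direction, if the product of Definition \ref{die erste definition} is an inner product then it is sesquilinear, in particular $\mathbb{C}$-linear in its first slot, so $< e^{{\mathbf{i}}\varphi}\vec{x}\,|\,\vec{y}> = e^{{\mathbf{i}}\varphi}<\vec{x}\,|\,\vec{y}>$ holds for every real $\varphi$. For the converse I would first upgrade the hypothesis to full complex homogeneity: writing a scalar as $z = r\,e^{{\mathbf{i}}\varphi}$ with $r\ge 0$ and combining the phase identity with Proposition \ref{proposition drei}(c) gives $<z\vec{x}\,|\,\vec{y}> = z<\vec{x}\,|\,\vec{y}>$ for all $z\in\mathbb{C}$, after which conjugate symmetry (Proposition \ref{proposition drei}(a)) yields $<\vec{x}\,|\,z\vec{y}> = \bar{z}<\vec{x}\,|\,\vec{y}>$. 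At this point the product is conjugate-symmetric, positive definite and homogeneous in the sesquilinear sense, so the only inner-product axiom still missing is additivity in the first argument.

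Thus the whole problem reduces to proving additivity, and I would obtain it by establishing the parallelogram law $\|\vec{x}+\vec{y}\|^{2}+\|\vec{x}-\vec{y}\|^{2} = 2\|\vec{x}\|^{2}+2\|\vec{y}\|^{2}$ for the norm. By the Jordan--von Neumann theorem this makes $(X,\|\cdot\|)$ an inner product space whose inner product is recovered from the norm by the polarization formula in line \eqref{allererste definition}; and, as noted in the introduction, in that situation the product of Definition \ref{die erste definition} coincides with this inner product, because the normalization factor $\|\vec{x}\|\cdot\|\vec{y}\|$ exactly compensates the normalization of the arguments. Hence establishing the parallelogram law finishes the proof.

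To prove the parallelogram law it suffices to work inside the complex plane spanned by $\vec{x},\vec{y}$; the linearly dependent case lives in a one-dimensional complex subspace on which the norm is a multiple of the modulus and the identity is immediate. For unit vectors $\vec{u},\vec{v}$ set $H(t):=\|\vec{u}+e^{{\mathbf{i}}t}\vec{v}\|^{2}$, a $2\pi$-periodic real function. A direct computation from Definition \ref{die erste definition}, using absolute homogeneity of the norm (so that $\|e^{{\mathbf{i}}\varphi}\vec{u}+\vec{v}\|^{2}=H(-\varphi)$ and $\|e^{{\mathbf{i}}\varphi}\vec{u}-\vec{v}\|^{2}=H(\pi-\varphi)$), gives $\mathrm{Re}<e^{{\mathbf{i}}\varphi}\vec{u}\,|\,\vec{v}> = \tfrac{1}{4}\bigl(H(-\varphi)-H(\pi-\varphi)\bigr)$, whose Fourier expansion involves only the odd harmonics of $H$. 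The phase identity forces this to be a pure first harmonic $a\cos\varphi-b\sin\varphi$, so for the single pair $(\vec{u},\vec{v})$ it is equivalent to the vanishing of all odd-order Fourier coefficients of $H$ beyond the first. On the other hand the parallelogram law for $(\vec{u},\vec{v})$ is exactly $H(t)+H(t+\pi)\equiv\text{const}$, i.e. the vanishing of all even-order Fourier coefficients of $H$ beyond the zeroth.

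The hard part, and the genuine content of the conjecture, is therefore to pass from this odd-harmonic information to the even-harmonic conclusion. A single pair gives no control over the even harmonics, and replacing $\vec{v}$ by $e^{{\mathbf{i}}\alpha}\vec{v}$ merely shifts $H$ without creating new frequencies, so the even harmonics must be annihilated by a global argument that uses the hypothesis for all pairs at once. My plan is to feed in auxiliary pairs built from real-linear combinations of $\vec{u},\vec{v},{\mathbf{i}}\vec{u},{\mathbf{i}}\vec{v}$ (the realification of the plane): since the norm is nonlinear, the odd-harmonic conditions attached to such combinations mix frequencies, and I expect them to force the second harmonic of $H$, and then inductively every even harmonic, to vanish, with continuity propagating the conclusion across the whole family of pairs. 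Making this frequency-mixing precise is where I anticipate the main obstacle, and it is the step on which the truth of the conjecture ultimately depends.
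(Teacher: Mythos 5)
The statement you are working on is presented in the paper as an open \emph{Conjecture}: the author proves nothing beyond the remark that one direction is trivial, so there is no proof of record to compare yours against. Within that context, the parts of your proposal that are actually carried out are correct and go further than the paper. The trivial direction is fine. Your reduction of the converse is sound: the phase identity together with Proposition \ref{proposition drei}(c) gives $<z\vec{x}\,|\,\vec{y}> = z<\vec{x}\,|\,\vec{y}>$ for all $z\in{\mathbb C}$, conjugate symmetry transfers this to the second slot, and since Proposition \ref{proposition drei}(e) ties the norm to the product, ``the product of Definition \ref{die erste definition} is an inner product'' is equivalent to ``the norm satisfies the parallelogram law'' (Jordan--von Neumann in one direction, Proposition \ref{proposition drei}(e) in the other, with bilinearity identifying the polarized inner product with the normalized product of Definition \ref{die erste definition}). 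The Fourier reformulation is also correct: for unit vectors the phase identity for the pair $(\vec{u},\vec{v})$ is exactly the vanishing of the odd harmonics of order $\geq 3$ of $H(t)=\|\vec{u}+e^{{\mathbf{i}}t}\vec{v}\|^{2}$, while the parallelogram law for that pair is the vanishing of its even harmonics of order $\geq 2$. This is a sharper diagnosis of where the difficulty lives than anything in the paper.

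Nevertheless, as a proof the proposal has a genuine gap, and it sits precisely at the point you flag yourself. The odd-harmonic and even-harmonic contents of $H$ are logically independent, so the hypothesis applied to the single pair $(\vec{u},\vec{v})$ (or to $(\vec{u},e^{{\mathbf{i}}\alpha}\vec{v})$, which only translates $H$) says nothing about the second harmonic, and everything rests on the claim that the hypothesis applied to real-linear combinations of $\vec{u},\vec{v},{\mathbf{i}}\vec{u},{\mathbf{i}}\vec{v}$ ``mixes frequencies'' enough to kill the even harmonics. No identity or inequality implementing that mixing is exhibited: because the norm enters through $\|\cdot\|^{2}$ of nonlinear combinations, the odd-harmonic condition for an auxiliary pair is a constraint on a \emph{different} function than $H$, and you give no mechanism relating its Fourier coefficients back to those of $H$. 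Until such a relation is produced (or a counterexample found), the converse direction remains exactly as open as the paper leaves it; your write-up should present the reduction and the Fourier criterion as partial results, not as a proof.
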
   
  \section{The Cauchy-Schwarz-Bunjakowsky Inequality}    \label{section three} 
     In this section we deal with the famous Cauchy-Schwarz-Bunjakowsky inequality or 
     `${\mathsf{ CSB }}$ inequality' , 
     or briefly the Cauchy-Schwarz inequality. Another name is the `Polarization Inequality'.
     Let $ X $ be a complex normed space, let  $\| \cdot \| $ be the norm on $ X $ and let $ < \cdot \: | \: \cdot > $
     be the product from Definition \ref{die erste definition}. 
     We ask whether  in the triple   $( X , \| \cdot \| ,  < \cdot \: | \: \cdot> ) $  the inequality 
  \begin{align}    \label {CSB Ungleichung}
     |< \vec{x} \: | \: \vec{y} >| \ \leq \ \|\vec{x}\| \cdot \|\vec{y}\| \; 
   \end{align}       
     is fulfilled for all $ \vec{x}, \vec{y} \in X $. The answer is positive. 
   \begin{theorem}  \label{main theorem}
       The Cauchy-Schwarz-Bunjakowsky inequality in line \eqref{CSB Ungleichung} holds in all 
        complex vector spaces $ X $, provided with a norm $ \| \cdot \| $ 
        and the product $ < \cdot \: | \: \cdot > $ from Definition \ref{die erste definition}.            
   \end{theorem}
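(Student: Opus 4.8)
By the real homogeneity (Proposition~\ref{proposition drei}(c)) both sides of \eqref{CSB Ungleichung} scale identically under $\vec x\mapsto r\vec x,\ \vec y\mapsto r\vec y$ with $r>0$, and the case $\vec x=\vec 0$ or $\vec y=\vec 0$ is trivial. Hence I may assume $\|\vec x\|=\|\vec y\|=1$ and must only prove $|<\vec x\,|\,\vec y>|\le 1$. Inspecting Definition~\ref{die erste definition}, the number $<\vec x\,|\,\vec y>$ depends solely on the four norms $\|\vec x\pm\vec y\|$ and $\|\vec x\pm{\mathbf i}\vec y\|$, all of which are norms of vectors in the complex span $\mathrm{span}_{\mathbb C}\{\vec x,\vec y\}$. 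Restricting $\|\cdot\|$ to this subspace reduces everything to a complex normed space of dimension at most two, i.e. to $({\mathbb C}^2,\|\cdot\|')$ with an arbitrary norm (the one-dimensional case being immediate). This is the announced reduction to ${\mathbb C}^2$.

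Writing $<\vec x\,|\,\vec y>=a+{\mathbf i}b$ for the unit vectors, Definition~\ref{die erste definition} yields the two real polarizations
\[
 a=\tfrac14(\|\vec x+\vec y\|^2-\|\vec x-\vec y\|^2),\qquad b=\tfrac14(\|\vec x+{\mathbf i}\vec y\|^2-\|\vec x-{\mathbf i}\vec y\|^2),
\]
so the claim is exactly $a^2+b^2\le 1$. The decisive structural fact is that the complex line ${\mathbb C}\vec y=\mathrm{span}_{\mathbb R}\{\vec y,{\mathbf i}\vec y\}$ carries the Euclidean norm, since $\|\alpha\vec y+\beta{\mathbf i}\vec y\|=|\alpha+{\mathbf i}\beta|=\sqrt{\alpha^2+\beta^2}$. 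Thus every $\vec u_\varphi:=e^{{\mathbf i}\varphi}\vec y=\cos\varphi\,\vec y+\sin\varphi\,{\mathbf i}\vec y$ is again a unit vector, and I would introduce the whole circle of real polarizations
\[
 c(\varphi):=\tfrac14(\|\vec x+\vec u_\varphi\|^2-\|\vec x-\vec u_\varphi\|^2),\qquad c(0)=a,\ \ c(\tfrac\pi2)=b .
\]
By the triangle inequality $\|\vec x+\vec u_\varphi\|\le 2$ and $\|\vec x-\vec u_\varphi\|\ge 0$ (and symmetrically), so $c(\varphi)\in[-1,1]$ for every $\varphi$; this is just the classical, easy real Cauchy--Schwarz inequality applied to the pair of unit vectors $\vec x,\vec u_\varphi$.

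The plan is then to bound the target by the supremum of this circle: I would show that some phase $\varphi^{\ast}$ satisfies
\[
 c(\varphi^{\ast})\ \ge\ \sqrt{a^2+b^2}\,=\,|<\vec x\,|\,\vec y>| .
\]
Together with $c(\varphi^{\ast})\le 1$ this gives $a^2+b^2\le 1$ and hence \eqref{CSB Ungleichung}. The natural candidate is the peak direction $\vec u_{\varphi^{\ast}}=(a\vec y+b\,{\mathbf i}\vec y)/\sqrt{a^2+b^2}$, for which the desired estimate reads $c(\varphi^{\ast})\ge a\cos\varphi^{\ast}+b\sin\varphi^{\ast}$: the real polarization must dominate, at its own optimal direction, the sinusoid interpolating its values at $0$ and $\pi/2$.

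The hard part is precisely this last inequality. In a complex inner product space $\varphi\mapsto c(\varphi)$ is the sinusoid $a\cos\varphi+b\sin\varphi$ (equivalently $<\vec x\,|\,e^{{\mathbf i}\varphi}\vec y>=e^{-{\mathbf i}\varphi}<\vec x\,|\,\vec y>$), its maximum is exactly $\sqrt{a^2+b^2}$, and the classical inequality drops out; this is the promised proof that avoids linearity. For a general norm this linearity fails --- the preceding lemma shows $<e^{{\mathbf i}\varphi}\vec x\,|\,\vec y>\neq e^{{\mathbf i}\varphi}<\vec x\,|\,\vec y>$ --- so $c$ need not be sinusoidal, and the naive bounds $|a|,|b|\le 1$ only give $a^2+b^2\le 2$, i.e. the weaker estimate $|<\vec x\,|\,\vec y>|\le\sqrt2$ of Corollary~\ref{corollary always angle}. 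To recover the missing factor I would exploit the two features that these naive bounds ignore: first, the convexity of $\varphi\mapsto\|\vec x+\vec u_\varphi\|^2$ along segments, together with a norming functional $\Lambda$ of $\vec x$ (with $\Lambda(\vec x)=1=\|\Lambda\|_{\ast}$, whose restriction to the Euclidean line ${\mathbb C}\vec y$ is an ordinary scalar product against a vector of length $\le 1$); and second, the absolute homogeneity of the norm, which rigidly couples the data $\{\vec x\pm\vec y\}$ governing $a$ to the data $\{\vec x\pm{\mathbf i}\vec y\}$ governing $b$ (the coupling that the finite four-point picture loses). A compactness argument would then reduce the verification to an extremal configuration, where the statement collapses to a finite, if tiring, trigonometric estimate, with equality forced only in the inner-product case.
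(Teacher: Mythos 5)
Your opening reduction is sound and coincides with the paper's: by homogeneity and restriction to the span of $\vec x,\vec y$ one may assume $\vec x,\vec y$ are unit vectors in $({\mathbb C}^2,\|\cdot\|)$, and the observation that the complex line ${\mathbb C}\vec y$ carries the Euclidean norm, so that every $c(\varphi)$ lies in $[-1,1]$, is correct (it is the content of Lemma \ref{lemma eins}). But the proof stops exactly where the theorem begins. Everything rests on the claim that some phase $\varphi^{\ast}$ satisfies $c(\varphi^{\ast})\ \ge\ \sqrt{a^{2}+b^{2}}$, i.e.\ that the real polarization dominates, at its own peak direction, the sinusoid interpolating its values at $0$ and $\pi/2$. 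This is not proved; it is only asserted, followed by a list of tools (norming functionals, convexity of $\varphi\mapsto\|\vec x+\vec u_\varphi\|^{2}$, compactness) none of which is applied to yield the inequality. Worse, the claim is itself at least as strong as the theorem: since $c(\varphi^{\ast})\le 1$ is all you have on the other side, proving $c(\varphi^{\ast})\ge\sqrt{a^{2}+b^{2}}$ is precisely the hard quantitative statement, and you give no reason why $c$ could not fall strictly below the interpolating sinusoid at the intermediate angle $\varphi^{\ast}$ while still agreeing with it at the four quarter points $0,\pi/2,\pi,3\pi/2$ (where agreement is forced by Proposition \ref{proposition drei}(c),(d)). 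So this is a genuine gap, not a routine verification left to the reader.

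For comparison, the paper closes exactly this gap by a concrete mechanism your sketch does not contain: after normalizing to $\vec x=(1,0)$, $\vec y=(0,1)$ and writing the four relevant norms as $1/s,1/t,1/v,1/w$, it uses the vector identity
\begin{equation*}
 \left( \begin{array}{c} 1 \\ 1 \end{array} \right)
 = \left[(1-a) - {\mathbf{i}} b \right]\left(\begin{array}{c} 1 \\ 0 \end{array}\right)
 + \left[(1-b) + {\mathbf{i}} a \right]\left(\begin{array}{c} 0 \\ 1 \end{array}\right)
 + \left[a + {\mathbf{i}} b \right]\left(\begin{array}{c} 1 \\ -{\mathbf{i}} \end{array}\right)
\end{equation*}
(and its analogue for $(1,{\mathbf{i}})$) to couple the data governing the real part to the data governing the imaginary part --- the coupling you correctly identify as missing from the naive bounds $|a|,|b|\le 1$. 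Optimizing the resulting estimate over the free parameter (Propositions \ref{proposition neun} and \ref{entscheidene proposition}) gives $(1/s)^{2}\le 2+\sqrt{4w^{2}-1}/w^{2}$, and the proof is finished by the purely algebraic Lemma \ref{last lemma}, whose inequality reduces after two substitutions to the nonnegativity of a square. If you want to salvage your route, you would need either to prove your dominance claim for $c(\varphi^{\ast})$ directly (which looks no easier than the theorem) or to replace it with an explicit coupling inequality of the above type.
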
  
  \begin{remark}   
    This theorem is the main contribution of the paper. 
    The proof of the  Cauchy-Schwarz inequality in inner product spaces is well documented in many books 
    about functional analysis by using the linearity of the inner product,   
    see for instance \cite{Werner}, p.204. This new proof of the Cauchy-Schwarz inequality depends only 
    on the norm in the vector space.   
   \end{remark}   
   \begin{proof} 
    First we need a lemma, which shows that for a complete answer it suffices to investigate the 
    complex vector space ${\mathbb C}^{2}$, provided with all possible norms. 
  \begin{lemma} The following two statements ${\mathrm{(1)}}$ and ${\mathrm{(2)}}$ are equivalent.     \\  
    ${\mathrm{(1)}}$ There exists a complex normed vector space $ ( X , \| \cdot \| ) $ and two vectors 
      $ \vec{a} , \vec{b} \in X $ \ with 
    \begin{equation}  \label{erste ungleichung}
        \ |< \vec{a} \: | \: \vec{b} >| \ > \ \|\vec{a}\| \cdot \|\vec{b}\| \ .  
    \end{equation}    
   ${\mathrm{(2)}}$ There is a norm $\| \cdot \| $ on  ${\mathbb C}^{2}$ and two unit vectors $ \vec{x} , \vec{y} 
                \in {\mathbb C}^{2} $ \ with  
     \begin{equation}  \label{zweite ungleichung}
         |< \vec{x} \: | \: \vec{y} >| \ > \ 1 \ . 
     \end{equation}       
   \end{lemma}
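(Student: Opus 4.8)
The plan is to prove the two implications separately; the implication ${\mathrm{(2)}}\Rightarrow{\mathrm{(1)}}$ is immediate, while ${\mathrm{(1)}}\Rightarrow{\mathrm{(2)}}$ carries the entire content of the lemma.

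For ${\mathrm{(2)}}\Rightarrow{\mathrm{(1)}}$ I would simply take $X:={\mathbb C}^{2}$ equipped with the norm furnished by ${\mathrm{(2)}}$ and set $\vec{a}:=\vec{x}$, $\vec{b}:=\vec{y}$. Since $\|\vec{x}\|=\|\vec{y}\|=1$, inequality \eqref{zweite ungleichung} reads $|<\vec{a}\,|\,\vec{b}>|>1=\|\vec{a}\|\cdot\|\vec{b}\|$, which is exactly \eqref{erste ungleichung}.

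For ${\mathrm{(1)}}\Rightarrow{\mathrm{(2)}}$ I would first normalize. Given $\vec{a},\vec{b}$ satisfying \eqref{erste ungleichung}, both are nonzero, since otherwise both sides of \eqref{erste ungleichung} would vanish; hence I may set $\vec{a}':=\vec{a}/\|\vec{a}\|$ and $\vec{b}':=\vec{b}/\|\vec{b}\|$. Dividing by the positive reals $\|\vec{a}\|$ and $\|\vec{b}\|$ and invoking the homogeneity for real numbers, Proposition~\ref{proposition drei}(c), gives $<\vec{a}'\,|\,\vec{b}'>\,=\,<\vec{a}\,|\,\vec{b}>/(\|\vec{a}\|\cdot\|\vec{b}\|)$, so \eqref{erste ungleichung} becomes $|<\vec{a}'\,|\,\vec{b}'>|>1$ for the unit vectors $\vec{a}',\vec{b}'$. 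Next I would note that $\vec{a}'$ and $\vec{b}'$ must be linearly independent over ${\mathbb C}$: were they dependent, then $\vec{b}'=e^{{\mathbf{i}}\cdot\theta}\cdot\vec{a}'$ for some real $\theta$, and the identity $<e^{{\mathbf{i}}\cdot\theta}\cdot\vec{a}'\,|\,\vec{a}'>\,=\,e^{{\mathbf{i}}\cdot\theta}\cdot<\vec{a}'\,|\,\vec{a}'>$ proven above, combined with conjugate symmetry, Proposition~\ref{proposition drei}(a), would force $|<\vec{a}'\,|\,\vec{b}'>|=1$, contradicting the strict inequality. Thus $V:=\mathrm{span}_{\mathbb C}\{\vec{a}',\vec{b}'\}$ is a two-dimensional complex subspace of $X$.

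The decisive observation is that the product of Definition~\ref{die erste definition} is built exclusively from the norm, and is therefore invariant under linear isometries. I would let $T:{\mathbb C}^{2}\to V$ be the linear isomorphism carrying the standard basis vectors to $\vec{a}'$ and $\vec{b}'$, and define a norm on ${\mathbb C}^{2}$ by transporting the restricted norm of $X$, i.e. $\|\vec{z}\|:=\|T\vec{z}\|$. Because $T$ is linear and norm-preserving, it sends each of the four combinations $\vec{a}'\pm\vec{b}'$ and $\vec{a}'\pm{\mathbf{i}}\cdot\vec{b}'$ to the corresponding combinations of the basis vectors with identical norms, so the two evaluations of Definition~\ref{die erste definition} agree term by term. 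Writing $\vec{x},\vec{y}$ for the preimages of $\vec{a}',\vec{b}'$, these are unit vectors with $|<\vec{x}\,|\,\vec{y}>|=|<\vec{a}'\,|\,\vec{b}'>|>1$, which is \eqref{zweite ungleichung}. The routine steps here are the normalization and the check that the transported norm is genuinely a norm; the one step I expect to require care, and the conceptual heart of the reduction, is the isometry-invariance of the product, namely verifying that every quantity entering Definition~\ref{die erste definition} is preserved by $T$, so that the product computed on $({\mathbb C}^{2},\|\cdot\|)$ equals the one computed on $V\subseteq X$.
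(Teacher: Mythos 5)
Your proposal is correct and follows essentially the same route as the paper: restrict to the two-dimensional subspace spanned by $\vec{a},\vec{b}$, transport the norm of $X$ to ${\mathbb C}^{2}$, and normalize, using that the product of Definition~\ref{die erste definition} depends only on the norm. You are in fact slightly more careful than the paper, which simply asserts that $\vec{a},\vec{b}$ are linearly independent, whereas you justify this via the identity $<e^{{\mathbf{i}}\cdot\theta}\cdot\vec{x}\,|\,\vec{x}>\;=\;e^{{\mathbf{i}}\cdot\theta}\cdot<\vec{x}\,|\,\vec{x}>$ and conjugate symmetry.
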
      
  \begin{proof}  \qquad (1) \ \ $\Leftarrow$  \ \   (2)   \  \  Trivial.  \\
         \qquad  \qquad               (1) \ \ $\Rightarrow$ \ \   (2)   \  \ Easy. \ 
         Let us consider the two-dimensional subspace $\mathsf{{U}}$ of $ X $ which is spaned by the linear 
         independent vectors $ \vec{a} , \vec{b} $. This space $\mathsf{{U}}$ is isomorphic to 
         ${\mathbb C}^{2}$. We take the norm from $ X $ on $\mathsf{{U}}$.
         We normalize $ \vec{a} , \vec{b} $, i.e. we define unit vectors $ \vec{x} := \vec{a}/\| \vec{a} \| $, and 
         $ \vec{y} := \vec{b}/\| \vec{b} \| $. 
         Hence the inequality \eqref{erste ungleichung} turns into \eqref{zweite ungleichung}. 
  \end{proof} 
     The lemma means, that we can restrict our investigations on the complex vector space  ${\mathbb C}^{2}$.
      By a transformation of coordinates we state that  instead of the unit vectors $ \vec{x} , \vec{y} $ 
     of inequality \eqref{zweite ungleichung} we set $ (1,0) := \vec{x}$, and $ (0,1) := \vec{y} $.
      With Definition \ref{die erste definition} the product \; $ < (1,0) | (0,1) >  $ \; has the presentation
      \begin{equation}       \label{produktgleichung}
	            \left<       \left( \begin{array}{c} 1 \\ 0 \end{array} \right) \ | \ 
	                         \left( \begin{array}{c} 0 \\ 1 \end{array}  \right)   \right> 
	              \ = \ \    \frac{1}{4} \ \cdot \
	              \left[ \;  \left\| \left(  \begin{array}{c} 1 \\ 1  \end{array}  \right) \right\|^{2}   \ - \ 
	                         \left\| \left(  \begin{array}{c} 1 \\ -1 \end{array}  \right) \right\|^{2} 
	                         \ + \ {\mathbf{ i}} \ \cdot 
	       \left( \left\| \left( \begin{array}{c} 1 \\ {\mathbf{ i}} \end{array}  \right)    \right\|^{2}   \ - \
	                         \left\| \ \left( \begin{array}{c} 1 \\ -{\mathbf{ i}}  \end{array} \right) \right\|^{2} 
	                                                                                     \right) \;  \right] \ . 
	    \end{equation}  
	   We take four suitable real numbers \ $ s, t, v, w $, and we define four positive values     
       \begin{equation}       \label{normen}
	          \left\| \left(  \begin{array}{c} 1 \\ 1  \end{array}  \right) \right\|        \ =: \ \frac{1}{s} \ , \ \ 
	          \left\| \left(  \begin{array}{c} 1 \\ -1  \end{array}  \right) \right\|       \ =: \ \frac{1}{t} \ , \ \ 
	   \left\| \left(  \begin{array}{c} 1 \\ {\mathbf{ i}}  \end{array}  \right) \right\|   \ =: \ \frac{1}{v} \ , \ \ 
	          \left\| \left(  \begin{array}{c} 1 \\ -{\mathbf{ i}} \end{array}  \right) \right\| \ =: \ \frac{1}{w} \ , 
     \end{equation}  
     or, equivalently, we have four unit vectors   \ \ \
    $(s , s), (t , -t), (v , {\mathbf{ i}} \cdot v ), (w , -{\mathbf{ i}} \cdot w ), \ \text{ i. e.}$  \\  
   \centerline{ $1 = \|(s , s) \| =  \|(t , -t) \| =  \|(v , {\mathbf{ i}} \cdot v ) \| = 
                     \|(w , -{\mathbf{ i}} \cdot w ) \|$.}   \\ 
    Hence the product \; $ < (1,0) | (0,1) >  $ \; changes into  
     \begin{equation}      \label{produktgleichungzwei}
	            \left<       \left( \begin{array}{c} 1 \\ 0 \end{array} \right) \ | \ 
	                         \left( \begin{array}{c} 0 \\ 1 \end{array}  \right)   \right> 
	              \ = \ \    \frac{1}{4} \ \cdot \
	              \left[ \;  \left( \frac{1}{s} \right)^{2}   \ - \   \left( \frac{1}{t} \right)^{2} 
	                         \ + \ {\mathbf{ i}} \cdot  \left( 
	                         \left( \frac{1}{v} \right)^{2}   \ - \  \left( \frac{1}{w} \right)^{2}  
	                                                                         \right) \;  \right] \ .      
	     \end{equation}       
    Further, instead of the  ${\mathsf{ CSB }}$ inequality $ |< (1,0) | (0,1) >| \leq 1 $, for an easier handling
    we can deal with the equivalent inequality  
     \begin{equation}   \label{abschaetzungCSB}
              \left| 4 \cdot \left< \left( \begin{array}{c} 1 \\ 0 \end{array} \right) \ | \ 
	                           \left( \begin{array}{c} 0 \\ 1 \end{array}  \right)   \right> \right|^{2} 
	      \ = \     \left[ \;  \left( \frac{1}{s} \right)^{2}   \ - \   \left( \frac{1}{t} \right)^{2} \;  \right]^{2}
	                         \ + \     
	                 \left[    \left( \frac{1}{v} \right)^{2}   \ - \  \left( \frac{1}{w} \right)^{2}  
	                                  \;  \right]^{2} \ \ \leq \ \ 16 \ .  
	   \end{equation}
   \begin{lemma}  \label{allererstes Lemma}
	                All four numbers \ $ s, t, v, w $ \ are greater or equal $ 1/2 $.
	 \end{lemma}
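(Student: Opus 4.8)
The plan is to obtain all four bounds directly from the triangle inequality, using that $(1,0)$ and $(0,1)$ are unit vectors together with the absolute homogeneity of the norm. Recall from the coordinate normalization preceding \eqref{produktgleichung} that $\|(1,0)\| = 1 = \|(0,1)\|$, and from \eqref{normen} that $1/s, 1/t, 1/v, 1/w$ are exactly the norms of the four vectors $(1,1)$, $(1,-1)$, $(1,\mathbf{i})$, $(1,-\mathbf{i})$.

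First I would observe that the sign and the imaginary unit appearing in the second coordinate leave the norms of the ``pure'' vectors unchanged. Indeed, by absolute homogeneity $\|(0,-1)\| = |-1| \cdot \|(0,1)\| = 1$, and likewise $\|(0,\mathbf{i})\| = \|(0,-\mathbf{i})\| = 1$, since $|\mathbf{i}| = |-\mathbf{i}| = 1$. This reduces all four cases to a common pattern.

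Next I would split each of the four vectors as a sum of $(1,0)$ and the appropriate pure vector and apply the triangle inequality. For instance $\|(1,1)\| \leq \|(1,0)\| + \|(0,1)\| = 2$, and by the previous remark the identical bound $\leq 2$ holds for $\|(1,-1)\|$, $\|(1,\mathbf{i})\|$, and $\|(1,-\mathbf{i})\|$ as well. In the notation of \eqref{normen} these four estimates read $1/s \leq 2$, $1/t \leq 2$, $1/v \leq 2$, and $1/w \leq 2$.

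Finally, since $s,t,v,w$ are by construction positive, inverting these four inequalities gives $s,t,v,w \geq 1/2$, which is the assertion. There is no genuine obstacle in this argument; the only point that warrants a moment's attention is the appeal to absolute homogeneity to dispose of the minus signs and of the factor $\mathbf{i}$ in the second coordinate, after which the bound is an immediate consequence of the triangle inequality applied to unit vectors.
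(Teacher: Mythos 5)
Your proof is correct and is essentially the paper's own argument: the paper applies the triangle inequality to $(s,s) = s\cdot(1,0) + s\cdot(0,1)$ to get $1 \leq 2s$, which is just your inequality $\|(1,1)\| \leq 2$ rescaled by $s$, and the other three cases are handled identically. Nothing further is needed.
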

	   \begin{proof}   
	              For instance to show $ 1/2 \leq s $, use the equation $ (s,s) = s \cdot (1,0) + s \cdot (0,1) $. 
	              Apply the triangle inequality, and note \ 
	              $ \| (s,s) \| = 1 $, \ and also \ $ \| (0,1) \| = 1 = \| (1,0) \| $. 
	   \end{proof}             
	      The next lemma means, that we can assume that both the real part and the imaginary part of
	      \, $ < (1,0) | (0,1) > $ \, are positive.   
	   \begin{lemma}    \label{zweites Lemma}
	              Without restriction of generality we assume $ s < t $ and $  v < w $.
     \end{lemma}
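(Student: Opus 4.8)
The plan is to exploit the fact that the quantity to be bounded, namely $\lvert < (1,0) \, | \, (0,1) > \rvert$, depends on the norm only through the four numbers $s,t,v,w$ and, by \eqref{produktgleichungzwei} and \eqref{abschaetzungCSB}, only through the two differences $(1/s)^2-(1/t)^2$ and $(1/v)^2-(1/w)^2$, each of which enters \eqref{abschaetzungCSB} only squared. Hence the expression we must estimate is literally unchanged if we interchange $s$ with $t$, or $v$ with $w$. So it suffices to show that each of these two interchanges is induced by a legitimate relabelling of the data for which the entire reduction down to \eqref{produktgleichungzwei} stays valid; we are then free to choose the labelling with $(1/s)^2-(1/t)^2\ge 0$ and $(1/v)^2-(1/w)^2\ge 0$, that is, with $\mathrm{Re}$ and $\mathrm{Im}$ of $< (1,0) \, | \, (0,1) >$ both nonnegative, which is exactly the assumption $s\le t$ and $v\le w$.

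First I would realise the interchange $v\leftrightarrow w$ by means of the conjugate symmetry, Proposition \ref{proposition drei}(a). Replacing the ordered pair $\bigl( (1,0),(0,1) \bigr)$ by $\bigl( (0,1),(1,0) \bigr)$ --- again two unit vectors of the same space --- and reading off the four norms occurring in Definition \ref{die erste definition}, one checks that $< (0,1) \, | \, (1,0) >$ has precisely the presentation \eqref{produktgleichungzwei} with $v$ and $w$ interchanged; by part (a) it equals $\overline{< (1,0) \, | \, (0,1) >}$, so its modulus is unchanged and only the sign of the imaginary part is reversed. After the coordinate swap that turns $(0,1),(1,0)$ back into the standard basis (pulling the norm back along the corresponding linear isomorphism, which is again a norm), the whole reduction to \eqref{produktgleichungzwei} applies verbatim. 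Thus, up to this harmless relabelling, we may assume $v\le w$.

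Next I would realise the interchange $s\leftrightarrow t$ by the homogeneity for real numbers, Proposition \ref{proposition drei}(c). Passing from $\bigl( (1,0),(0,1) \bigr)$ to $\bigl( (0,-1),(1,0) \bigr)$ and again evaluating the four occurring norms shows that $< (0,-1) \, | \, (1,0) >$ is \eqref{produktgleichungzwei} with $s$ and $t$ interchanged while $v,w$ are left intact; combining (c) (with $r=-1$) and (a) gives $< (0,-1) \, | \, (1,0) > = -\overline{< (1,0) \, | \, (0,1) >}$, so once more the modulus is preserved while only the real part changes sign. Since the two interchanges act on disjoint pairs and each preserves $\lvert < (1,0) \, | \, (0,1) > \rvert$, they may be imposed simultaneously, and we may assume both $s\le t$ and $v\le w$.

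Finally, to upgrade $\le$ to $<$, I would dispose of the degenerate configurations directly: if $s=t$ then $\mathrm{Re} < (1,0) \, | \, (0,1) > =0$ and the left-hand side of \eqref{abschaetzungCSB} reduces to $\bigl[ (1/v)^2-(1/w)^2 \bigr]^2$; by Lemma \ref{allererstes Lemma} we have $v,w\ge 1/2$, hence $(1/v)^2,(1/w)^2\in(0,4]$, so $\bigl\lvert (1/v)^2-(1/w)^2 \bigr\rvert\le 4$ and \eqref{abschaetzungCSB} holds at once, and the case $v=w$ is symmetric. Thus the boundary cases are already settled, and in the remaining analysis we may assume the strict inequalities $s<t$ and $v<w$. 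I expect the only genuinely delicate point to be the bookkeeping of the previous two paragraphs: verifying that each relabelling really reproduces the presentation \eqref{produktgleichungzwei} with exactly the claimed pair interchanged and leaves the modulus untouched; everything beyond that is routine.
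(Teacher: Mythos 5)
Your argument is correct and follows essentially the same route as the paper: the equality cases $s=t$ (and $v=w$) are disposed of by verifying \eqref{abschaetzungCSB} directly via Lemma \ref{allererstes Lemma}, and the sign normalisations are achieved through Proposition \ref{proposition drei}(a) and (c) combined with coordinate relabelings. You are merely more explicit than the paper in tracking exactly how the four norms $1/s,1/t,1/v,1/w$ are permuted under each relabeling, which is a welcome but not substantively different elaboration.
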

	   \begin{proof}
	       In the case of $ s = t $, the  first sumand of the middle term in line \eqref{abschaetzungCSB} is zero.  
	       From Lemma \ref{allererstes Lemma} follows $ 1/v \leq 2 $. Hence 
	       $ | 4 \cdot < (1,0) | (0,1) > |^{2} \leq  (1/v)^{4} \leq (2)^{4} = 16 $, it holds                                     \eqref{abschaetzungCSB}.  
	                	         
	      In the case of $ s > t , \ \text{i.e.} \ 1/s < 1/t $, i.e. we have a negative real part of 
	        $ < (1,0) | (0,1) > $, we consider instead $ < (-1,0) | (0,1) > $. 
	        By Proposition \ref{proposition drei}(c), we get a positive real part. With a transformation of
	        coordinates we rename $ -(1,0) $ into $ (1,0) $, to get a representation $ < (1,0) | (0,1) > $
	        with positive real part. In the case that the imaginary part of $ < (1,0) | (0,1) > $ 
	        is still negative, we take the product $ < (0,1) | (1,0) > $.
	        Now, by Proposition \ref{proposition drei}(a), also the imaginary part is positive. 
	        We make a second transformation of coordinates, and in new coordinates we call this $ < (1,0) | (0,1) > $.  
	   \end{proof}
	     The following propositions Proposition \ref{proposition neun} and Proposition \ref{entscheidene proposition} 
	     collect general properties of the product   
	     $ < (1,0) | (0,1) > $  from line \eqref{produktgleichung}. The proofs always
	     rely on the triangle inequality of a normed space, which is equivalent to the fact
	     that its unit ball is convex.   
 	 
	 The next proposition looks weird, but it will give the deciding hint for the proof. 
	 \begin{proposition}  \label{proposition neun}
	  	        We get for each $ b \in  {\mathbb R} $ the following two inequalities.
	     \begin{eqnarray}   \label{wichtige Ungleichung}
              \frac{1}{s} \ \leq \ 
               2 \cdot \sqrt{1 + 2 \cdot b^{2} - 2 \cdot b}  + \sqrt{2} \cdot |b| \cdot \frac{1}{w} \ , \ \ \
               \frac{1}{v} \ \leq \ 
               2 \cdot \sqrt{1 + 2 \cdot b^{2} - 2 \cdot b} + \sqrt{2} \cdot |b| \cdot \frac{1}{t} \ . 
	     \end{eqnarray}                    
 	 \end{proposition}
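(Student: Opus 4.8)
The plan is to read off from line~\eqref{normen} that $\frac{1}{s}=\left\| (1,1) \right\|$ and $\frac{1}{w}=\left\| (1,-\mathbf{i}) \right\|$, and then to bound the norm $\left\| (1,1) \right\|$ by the triangle inequality applied to a carefully chosen representation of $(1,1)$ as a complex-linear combination of \emph{unit} vectors. The unit vectors I would use are the two coordinate vectors $(1,0)$ and $(0,1)$, which have norm $1$ by the normalisation fixed right after inequality~\eqref{zweite ungleichung}, together with the unit vector $(w,-\mathbf{i}\cdot w)=w\cdot(1,-\mathbf{i})$ from line~\eqref{normen}. By absolute homogeneity any term $m\cdot(1,-\mathbf{i})$ then contributes $|m|\cdot\frac{1}{w}$ to the estimate, which is precisely where the factor $\frac{1}{w}$ on the right-hand side of \eqref{wichtige Ungleichung} has to come from.

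First I would record that, solving the two coordinate equations, there is a whole one-parameter family of such representations: for \emph{every} complex number $m$ one checks directly that
\begin{equation*}
   (1,1) \ = \ (1-m)\cdot(1,0) \ + \ (1+\mathbf{i}\cdot m)\cdot(0,1) \ + \ m\cdot(1,-\mathbf{i}) \, .
\end{equation*}
Since $(1,0)$, $(0,1)$ are unit vectors and $\left\|(1,-\mathbf{i})\right\|=\frac{1}{w}$, the triangle inequality together with absolute homogeneity yields
\begin{equation*}
   \frac{1}{s} \ = \ \left\| (1,1) \right\| \ \leq \ |1-m| \ + \ |1+\mathbf{i}\cdot m| \ + \ |m|\cdot\frac{1}{w} \, .
\end{equation*}

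The entire argument then hinges on choosing the free parameter $m$ well, and this is the one genuinely clever step; everything else is routine. I would set $m:=b\cdot(1+\mathbf{i})$, so that $|m|=\sqrt{2}\cdot|b|$ reproduces exactly the coefficient of $\frac{1}{w}$ demanded by the Proposition. With this choice $1-m=(1-b)-\mathbf{i}\cdot b$ and $1+\mathbf{i}\cdot m=(1-b)+\mathbf{i}\cdot b$, so that \emph{both} moduli collapse to the same value $\sqrt{(1-b)^{2}+b^{2}}=\sqrt{1+2b^{2}-2b}$; their sum produces the factor $2$, and the first inequality of~\eqref{wichtige Ungleichung} follows. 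The point I expect to be delicate is precisely this matching of the phase of $m$ so that the two constant terms become equal: a careless choice, say a real $m$, would leave two unequal square roots and no clean bound.

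The second inequality is obtained by the identical device applied to $\frac{1}{v}=\left\|(1,\mathbf{i})\right\|$, now representing $(1,\mathbf{i})=(1-m)\cdot(1,0)+(\mathbf{i}+m)\cdot(0,1)+m\cdot(1,-1)$ and using $(1,-1)=\frac{1}{t}\cdot(t,-t)$, so that the term $m\cdot(1,-1)$ contributes $|m|\cdot\frac{1}{t}$. Choosing the free coefficient $m:=b\cdot(1-\mathbf{i})$ symmetrises the two constant moduli to $\sqrt{1+2b^{2}-2b}$ and leaves $\sqrt{2}\cdot|b|\cdot\frac{1}{t}$, which is exactly the claimed estimate.
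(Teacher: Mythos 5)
Your proof is correct and is essentially the paper's own argument: your one-parameter family of decompositions with complex $m$ coincides (via $m=a+\mathbf{i}\cdot b$) with the two-parameter identity in line \eqref{entscheidene Gleichung}, and your choice $m=b\cdot(1+\mathbf{i})$ is exactly the paper's specialisation $a=b$ applied to the estimate \eqref{Abchaetzung 1durch s}, with the second inequality handled analogously via \eqref{noch eine Gleichung}.
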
 
	   \begin{proof} Please see  the following Proposition \ref{proposition drei Punkt drei}.
	    From the line \eqref{Abchaetzung 1durch s} we get \\
	       \centerline{  $1/s \ \leq \ \sqrt{(1-a)^2 + b^2} \ + \  \sqrt{(1-b)^2 + a^2} \ + \ 
	                     \sqrt{a^2 + b^2} \, / w $ ,    }  \\
	      which is true for arbitrary real numbers $ a , b $. If we choose $ b = a $, it follows the first inequality
	      of Proposition \ref{proposition neun}. 
	      The second inequality uses the corresponding equation of line \eqref{noch eine Gleichung} .     
	  \end{proof} 
    The above Proposition \ref{proposition neun} has an important consequence. 
    \begin{proposition}   \label{entscheidene proposition}  
       If \ $ \frac{1}{2} \cdot \sqrt{2} \leq w $ \ it holds inequality ${\mathsf{ (A)}}$, and in the case of
          \ $ \frac{1}{2} \cdot \sqrt{2} \leq t $ \ it holds inequality ${\mathsf{ (B)}}$, where
	 $$ {\mathsf{ (A)}} : \  \left(\frac{1}{s}\right)^{2} \ \leq \ 2 + \frac{\sqrt{4 \cdot w^{2} - 1}}{ w^{2}} 
	     \qquad  \text{and} \qquad \
	    {\mathsf{ (B)}} : \ \left(\frac{1}{v}\right)^{2} \ \leq \ 2 + \frac{\sqrt{4 \cdot t^{2} - 1}}{ t^{2}}  \ .  $$  
	  \end{proposition}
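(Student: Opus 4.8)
The plan is to derive ${\mathsf{(A)}}$ and ${\mathsf{(B)}}$ by one and the same computation, since the two are obtained from the two inequalities of Proposition \ref{proposition neun} merely by interchanging the roles of the pairs $(w,s)$ and $(t,v)$. I will carry out the work for ${\mathsf{(A)}}$ and only indicate the substitution producing ${\mathsf{(B)}}$. The decisive point is that the first inequality of Proposition \ref{proposition neun}, namely
$$ \frac{1}{s} \ \leq \ 2 \cdot \sqrt{1 + 2 \cdot b^{2} - 2 \cdot b} \ + \ \sqrt{2} \cdot |b| \cdot \frac{1}{w} \ , $$
holds for \emph{every} real number $b$. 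Hence I am free to insert whatever value of $b$ makes the right-hand side smallest, and the resulting bound is exactly what ${\mathsf{(A)}}$ claims. Throughout I abbreviate $ c := 1/w $; the hypothesis $ \frac{1}{2} \cdot \sqrt{2} \leq w $ is then equivalent to $ c \leq \sqrt{2} $, which guarantees $ 4 - c^{2} \geq 2 > 0 $ so that all square roots below are real.

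First I would restrict attention to $ b \geq 0 $, so that $ |b| = b $, and minimise the function $ f(b) := 2 \sqrt{g(b)} + \sqrt{2} \cdot c \cdot b $, where $ g(b) := 1 + 2 b^{2} - 2 b = 2 \left( b - \tfrac{1}{2} \right)^{2} + \tfrac{1}{2} $. Setting $ f'(b) = 0 $ leads, after clearing the square root, to the quadratic
$$ 2 \cdot ( 4 - c^{2} ) \cdot b^{2} \ - \ 2 \cdot ( 4 - c^{2} ) \cdot b \ + \ ( 2 - c^{2} ) \ = \ 0 \ , $$
whose relevant root is $ b^{*} = \tfrac{1}{2} - \tfrac{c}{2 \sqrt{4 - c^{2}}} $; the other root exceeds $ \tfrac{1}{2} $ and corresponds to a local maximum. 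Here the hypothesis $ c \leq \sqrt{2} $ enters a second time, for it gives $ \tfrac{c}{2 \sqrt{4 - c^{2}}} \leq \tfrac{1}{2} $, hence $ b^{*} \geq 0 $, so that $ |b^{*}| = b^{*} $ as assumed. I stress that I need not prove $ b^{*} $ to be the global minimiser over all reals: since the inequality above is valid for \emph{every} $ b $, it suffices to substitute this particular $ b^{*} $.

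The remaining step is this substitution, which I expect to be the main obstacle, since success hinges on watching the algebra collapse rather than on any new idea. The key simplification is
$$ g(b^{*}) \ = \ 2 \cdot \left( b^{*} - \tfrac{1}{2} \right)^{2} + \tfrac{1}{2} \ = \ \frac{c^{2}}{2 \cdot ( 4 - c^{2} )} + \frac{1}{2} \ = \ \frac{2}{4 - c^{2}} \ , $$
after which the three surviving terms combine, via $ \tfrac{2 \sqrt{2}}{\sqrt{4 - c^{2}}} - \tfrac{\sqrt{2}\, c^{2}}{2 \sqrt{4 - c^{2}}} = \tfrac{\sqrt{2} \cdot \sqrt{4 - c^{2}}}{2} $, into the compact form
$$ f(b^{*}) \ = \ \frac{\sqrt{2}}{2} \cdot \left( \sqrt{4 - c^{2}} + c \right) \ . $$
Squaring and recalling $ 1/s \leq f(b^{*}) $ yields $ \left( 1/s \right)^{2} \leq 2 + c \cdot \sqrt{4 - c^{2}} $, and resubstituting $ c = 1/w $ turns $ c \cdot \sqrt{4 - c^{2}} $ into $ \sqrt{4 w^{2} - 1} \, / \, w^{2} $, which is precisely ${\mathsf{(A)}}$. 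Finally, replacing everywhere $ s $ by $ v $ and $ w $ by $ t $, and invoking the second inequality of Proposition \ref{proposition neun} together with the hypothesis $ \frac{1}{2} \cdot \sqrt{2} \leq t $, reproduces the identical computation and gives ${\mathsf{(B)}}$.
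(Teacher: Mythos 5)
Your argument is correct and is essentially the paper's own proof: both minimise the right-hand side of the first inequality of Proposition \ref{proposition neun} over $b$ by calculus, verify that the critical point is nonnegative precisely when $\frac{1}{2}\cdot\sqrt{2}\leq w$, and substitute it back to obtain $2+\sqrt{4\cdot w^{2}-1}/w^{2}$ (your $b^{*}$ equals the paper's $b_{E}$ after the substitution $c=1/w$). The only cosmetic difference is that parametrisation, and your correct observation that one need not prove $b^{*}$ is a global minimiser, since the inequality holds for every $b$.
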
 
	  \begin{proof}
	       To prove inequality ${\mathsf{ (A)}}$ \, we consider again Proposition \ref{proposition neun}, 
	       and we investigate the right hand side of the first inequality in line \eqref{wichtige Ungleichung}. 
	       For all constants $ w \geq 1/2 $, we define a function ${\cal R}(b), \ b \in  {\mathbb R}$, 
	    \begin{align}  \label{definition R von b}    
	       {\cal R}(b) 
	        \ := \  2 \cdot \sqrt{1 + 2 \cdot b^{2} - 2 \cdot b} \ + \ \sqrt{2} \cdot |b| \cdot \frac{1}{w} \ . 
	    \end{align}    
	       Obviously we get the limits \ $ \lim_{b \rightarrow +\infty} {\cal R}(b) = \lim_{b \rightarrow
	        -\infty} {\cal R}(b) = +\infty $  and since the parabola $ 1 + 2 \cdot b^{2} - 2 \cdot b $
	         has only positive values, we state that $ {\cal R} $ has a codomain of positive numbers,   \\
	   \centerline{  ${\cal R}: {\mathbb R} \rightarrow   {\mathbb R}^{+} \ . $  }       \\ 
	       By Proposition \ref{proposition neun} it holds $ 1/s \leq {\cal R}(b)$ for all $ b $, hence 
	       we are interested in minimums of $  {\cal R} $, to get an estimate for  $ 1/s $ as small as possible.  
	       Since $ {\cal R}(-b) > {\cal R}(b) $ for all positive $ b $, the minimum must occur at a 
	       non negative $ b $. Therefore, we consider the function $ {\cal R} $ for non negative $ b $. 
	       The search for a minimum is the standard method, we have 
	   $$ {\cal R}'(b) \ = \ \frac{ 4 \cdot b - 2}{\sqrt{1 + 2 \cdot b^{2} - 2 \cdot b} } + \frac{\sqrt{2}}{w} \ ,
	                                                  \ \ \   \text{for all} \ b \geq 0 \ .             $$
	      In the case of \ $ \frac{1}{2} \cdot \sqrt{2} < w $ \  the equation \ $ {\cal R}'(b_E) = 0 $
	      \ has one positive solution \ $ b_{E} $, 
	   $$  b_{E} \ = \ \frac{1}{2} \cdot \left[ 1 - \frac{ 1 }{\sqrt{4 \cdot w^{2} - 1} } \right] \ \qquad  
	         ( \ \text{We have} \ w > 1/2 \ \text{by the lemmas} \ \ref{allererstes Lemma} \
	                    \text{and} \ \ref{zweites Lemma} \ ).                                $$
	      Recall that we are looking for positive $ b_{E} \, 's $, hence we are investigating the positive part 
	      in the definition \eqref{definition R von b} of $ {\cal R}(b) $, i.e. $ b \geq 0 $. 
	      Note that the condition $  0 \leq  b_{E}  $ holds if $ \frac{1}{2} \cdot \sqrt{2} \leq w $. 
	      We add this as an assumption, i.e. in this Proposition  we assume \ $ \frac{1}{2} \cdot \sqrt{2} \leq w, t $. 
	                                                               
	           As an intermediate step we mention that for the term \ $ 1 + 2 \cdot b^{2} - 2 \cdot b $ \, for \,
	           $ b = b_{E} $ \, we get the value 
	     $$  \frac{2 \cdot  w^{2} }{4 \cdot  w^{2} - 1} \ .    $$    
	      Finally we get an expression of $ {\cal R} \ \text{at} \ b_{E} $, we have 
	      $$   \left( b_{E},  {\cal R}(b_{E}) \right) \ = \ 
	      \left( \frac{1}{2} \cdot \left[ 1 - \frac{ 1 }{ \sqrt{ 4 \cdot w^{2} - 1} } \right] \, , \ \ 
	      \frac{\sqrt{2}}{2 \cdot w} \cdot \left[ 1 +  \sqrt{ 4 \cdot w^{2} - 1}  \right] \right) \  .    $$
	      By Proposition \ref{proposition neun} we get an estimate for $ 1/s $, but actually we are more 
	      interested in an estimate for $ (1/s)^{2} $. We calculate  
	      $$  \left( {\cal R}(b_{E}) \right)^{2}  \ = \ 2 + \frac{ \sqrt{ 4 \cdot w^{2} - 1}}  {w^{2}} \ ,  $$
	      which finishes the proof of Proposition \ref{entscheidene proposition}. 
	       \end{proof} 
       The  above propositions may be a useful tool  for further computations, but we do not know
	           whether the list is complete. For our purpose it will be sufficient. With   
	           Proposition \ref{entscheidene proposition} we are able to do the final stroke. 
	     We are still proving Theorem \ref{main theorem}, i.e. we try to confirm the Cauchy-Schwarz
	     inequality \eqref{abschaetzungCSB}. To prove the theorem, we need to distinguish between  three cases
	     $ \mathsf{ (Case \, a), (Case \, b),  (Case \, c)} $; only the third will be difficult.  \\  \\ 
	     $ \mathsf{ (Case \, a) }$: Let both  $ t, w $ be in the closed interval \
	     $ \left[1/2, \, \sqrt{2} / 2 \right] $. 
	     Hence \  $ | 4 \cdot < (1,0) | (0,1) >|^{2} =  $  \     
	     $\left[ (1/s)^2 - (1/t)^{2} \right]^{2} +  \left[(1/v)^2 - (1/w)^{2}\right]^{2} \, \leq  \,
	      2 \cdot \left[ 4 - (2/\sqrt{2})^{2} \right]^{2} \, = \, 2 \cdot \left[ 2 \right]^{2} = 8 \ .   $   \\ \\
	      $ \mathsf{ (Case \, b) }$: Let \ $ 1/2 \, < \, t \, \leq \, \sqrt{2} / 2  \, < \, w $ \ or the contrary \
	      \ $ 1/2 \, < \, w \, \leq \, \sqrt{2} / 2  \, < \, t $.  \\
	    We use the estimate ${\mathsf{ (A)}}$ or ${\mathsf{ (B)}}$  from Proposition \ref{entscheidene proposition},
	    and we compute  
	    \begin{align}  
	       \left| 4 \cdot \left< \left( \begin{array}{c} 1 \\ 0 \end{array} \right) \ | \ 
	                           \left( \begin{array}{c} 0 \\ 1 \end{array}  \right)   \right> \right|^{2} 
	        \leq \ &   \left[ \ 2 + \frac{\sqrt{4 \cdot w^{2} - 1}}{ w^{2}} \ - 
	          \ \left(\frac{2}{\sqrt{2}} \right)^{2}  
	                  \, \right]^{2} + \left[ 2 \, ^{2} - \ \left( \frac{1}{w} \right)^{2}  \right]^{2} \           \\    
	         = \ & \ \frac{4 \cdot w^{2} - 1}{ w^{4}} \, +  \left[ \; 16 + \ \frac{1}{w^{4} } 
	            - \,  \frac{8}{w^{2}}  \, \right] \ = \, \frac{4}{w^{2}} + 16 \, - \, \frac{8}{w^{2}} \  ,            
	       \end{align} 
	       and it is trivial that the last sum is less than 16, hence the Cauchy Schwarz inequality 
	       \eqref{abschaetzungCSB} is confirmed for $ \mathsf{ (Case \, b) }$. \
         Before we deal with the last case $ \mathsf{ (Case \, c) }$, one more lemma is necessary. 
	 \begin{lemma}  \label{last lemma}  Let \quad $| t | , |w | \geq \frac{1}{2} $.   \\
	    \qquad The following two inequalities $ \mathsf{ (C)}$ and $\mathsf{ (D)}$ are equivalent, and both are true.  
	    \begin{align*}
	    {\mathsf{ (C)}}:   \quad & \left[  2 + \frac{ \sqrt{ 4 \cdot w^{2} - 1}}{w^{2}} - 
	                \left(\frac{1}{t}\right)^{2} \right]^{2} \ + \
	                \left[ 2 + \frac{ \sqrt{ 4 \cdot t^{2} - 1}}{t^{2}} - \left(\frac{1}{w}\right)^{2} \right]^{2}
	                \ \ \leq \ 16	                               \\
	    {\mathsf{ (D)}}:   \quad & \left( 2 \cdot t^{2} - 1 \right) \cdot \sqrt{ 4 \cdot w^{2} - 1} \ + \ 
	                \left( 2 \cdot w^{2} - 1 \right) \cdot \sqrt{ 4 \cdot t^{2} - 1}
	                \ \ \leq \ 4 \cdot t^{2} \cdot w^{2}             
	     \end{align*}  
	 \end{lemma}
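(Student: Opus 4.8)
The plan is to prove the equivalence $\mathsf{(C)} \Leftrightarrow \mathsf{(D)}$ by a chain of reversible algebraic steps, and then to establish the truth of $\mathsf{(D)}$ directly by a trigonometric substitution that collapses the whole left-hand side into the trivial bound $\sin(\alpha+\beta)\le 1$. Throughout I may work with $t,w>0$, since everything depends only on $t^2$ and $w^2$, and the hypothesis $|t|,|w|\ge\tfrac12$ guarantees $4t^2-1,\,4w^2-1\ge 0$.

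For the equivalence I would expand the left-hand side of $\mathsf{(C)}$, writing the first bracket as $\left(2-\tfrac{1}{t^2}\right)+\tfrac{\sqrt{4w^2-1}}{w^2}$ and the second as $\left(2-\tfrac{1}{w^2}\right)+\tfrac{\sqrt{4t^2-1}}{t^2}$. The decisive observation is the elementary identity
\begin{equation*}
\left(2-\frac{1}{t^2}\right)^2+\frac{4t^2-1}{t^4}=4,
\end{equation*}
together with the analogue in $w$. Squaring both brackets produces four ``non-cross'' squared terms, and these regroup in pairs (each $\left(2-\tfrac{1}{t^2}\right)^2$ with $\tfrac{4t^2-1}{t^4}$, and $\left(2-\tfrac{1}{w^2}\right)^2$ with $\tfrac{4w^2-1}{w^4}$) into $4+4=8$ by the identity above. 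Hence $\mathsf{(C)}$ reduces to $8+2\left[\left(2-\tfrac{1}{t^2}\right)\tfrac{\sqrt{4w^2-1}}{w^2}+\left(2-\tfrac{1}{w^2}\right)\tfrac{\sqrt{4t^2-1}}{t^2}\right]\le 16$; multiplying the surviving bracketed inequality by the positive quantity $t^2w^2$ turns it verbatim into $\mathsf{(D)}$. Since expanding a square and multiplying by a positive number are reversible, this yields $\mathsf{(C)}\Leftrightarrow\mathsf{(D)}$.

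To prove that $\mathsf{(D)}$ holds, I would use that $|t|,|w|\ge\tfrac12$ forces $\tfrac{2t^2-1}{2t^2}\in[-1,1)$, so I may set $\cos\alpha=\tfrac{2t^2-1}{2t^2}$ and $\cos\beta=\tfrac{2w^2-1}{2w^2}$ with $\alpha,\beta\in(0,\pi]$. A short computation gives $\sin\alpha=\tfrac{\sqrt{4t^2-1}}{2t^2}$ and $\sin\beta=\tfrac{\sqrt{4w^2-1}}{2w^2}$, whence $2t^2-1=2t^2\cos\alpha$, $\sqrt{4t^2-1}=2t^2\sin\alpha$, and symmetrically for $w$. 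Substituting these into the left-hand side of $\mathsf{(D)}$ factors out $4t^2w^2$ and leaves $4t^2w^2\left(\cos\alpha\sin\beta+\sin\alpha\cos\beta\right)=4t^2w^2\sin(\alpha+\beta)$. Because $\sin(\alpha+\beta)\le 1$ unconditionally, inequality $\mathsf{(D)}$ follows, and by the equivalence both $\mathsf{(C)}$ and $\mathsf{(D)}$ are true.

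The two Pythagorean-type identities and the $\sin/\cos$ computation are mechanical; the only genuine obstacle is spotting the substitution $\cos\alpha=(2t^2-1)/(2t^2)$ that converts the formidable-looking $\mathsf{(D)}$ into the tautology $\sin(\alpha+\beta)\le 1$. I would also remark that equality forces $\alpha+\beta=\pi/2$, which explains why $\mathsf{(D)}$ is sharp, and hence why $\mathsf{(Case\,c)}$ is the delicate one that could not be dispatched by the cruder slack estimates used in $\mathsf{(Case\,a)}$ and $\mathsf{(Case\,b)}$.
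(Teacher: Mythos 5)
Your proposal is correct, and it diverges from the paper precisely at the one step that matters. For the equivalence $\mathsf{(C)}\Leftrightarrow\mathsf{(D)}$ the paper only says the computation is ``straightforward''; your expansion via the identity $\left(2-\tfrac{1}{t^{2}}\right)^{2}+\tfrac{4t^{2}-1}{t^{4}}=4$ is exactly the intended computation, spelled out, and it checks out (the non-cross terms sum to $8$, the cross terms give $\mathsf{(D)}$ after multiplying by $t^{2}w^{2}>0$). For the truth of $\mathsf{(D)}$, however, the paper proceeds purely algebraically: it substitutes $p=4t^{2}-1$, $z=4w^{2}-1$, then $h=\sqrt{p}$, $k=\sqrt{z}$, and after multiplying by $-2$ recognizes the resulting quadratic in $h$ as the perfect square $\left[h\cdot(k-1)-(k+1)\right]^{2}\geq 0$. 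You instead set $\cos\alpha=\tfrac{2t^{2}-1}{2t^{2}}$, $\cos\beta=\tfrac{2w^{2}-1}{2w^{2}}$ (legitimate, since $|t|,|w|\geq\tfrac12$ puts these in $[-1,1)$), verify $\sin\alpha=\tfrac{\sqrt{4t^{2}-1}}{2t^{2}}$, and collapse the left side of $\mathsf{(D)}$ to $4t^{2}w^{2}\sin(\alpha+\beta)\leq 4t^{2}w^{2}$. Both arguments are sound and ultimately encode the same hidden positivity, but yours is arguably more transparent: it needs no inspired double substitution (the paper explicitly credits a colleague with finding those), and it delivers the equality case $\alpha+\beta=\pi/2$ immediately, which the paper only extracts afterwards in a separate Remark as the condition $h\cdot(k-1)=k+1$, i.e. $t=\tfrac{w\sqrt{2}}{\sqrt{4w^{2}-1}-1}$. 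One could translate your equality condition $\cos\alpha=\sin\beta$ into that formula, so the two characterizations agree.
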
  
	 \begin{proof} Starting with $ \mathsf{ (C)}$, the proof of the equivalence is straightforward. 
	                                      
	       The last step is to confirm the second inequality  $\mathsf{ (D)}$ for all $| t | , |w | \geq \frac{1}{2} $.    
	       This needs two tricky substitutions. The first is \ 
	       $ p :=  4 \cdot t^{2} - 1 $ \ and \ $ z :=  4 \cdot w^{2} - 1 $. The inequality $\mathsf{ (D)}$ leads to 
	 \begin{align}
	            \left(\frac{p+1}{2} - 1 \right) \cdot \sqrt{z} \ + \ 
	            \left(\frac{z+1}{2} - 1 \right) \cdot \sqrt{p} \ \
	            \leq \ & \  \frac{1}{4} \cdot \left( p \cdot z + p + z + 1 \right) \  \\
	            \Longleftrightarrow   \ \ \ p \cdot \sqrt{z}  -  \sqrt{z} \ + \ z \cdot \sqrt{p}  - \sqrt{p}  \ \
	            \leq \ & \ \frac{1}{2} \cdot \left( p \cdot z + p + z + 1 \right) \ .  
	 \end{align}  
	     The  second substitution is \ $ h := \sqrt{p} $ \ and \ $ k := \sqrt{z} $. \ It follows the 
	     equivalent inequalities
	    \begin{align}
	            \ & \ \ \ h^{2} \cdot k - k +  k^{2} \cdot h - h \ \ \leq 
	            \  \ \frac{1}{2} \cdot \left( h^{2} \cdot k^{2} + h^{2} + k^{2} + 1 \right) \ & \  \\
	            \Longleftrightarrow   \ & \ \ 
	            \  h^{2} \cdot \left( k - \frac{1}{2} \cdot k^{2} -  \frac{1}{2} \right) \ + \
	             h \cdot ( k^{2} - 1 ) \ - \ \left( k + \frac{1}{2} \cdot k^{2} + \frac{1}{2} \right) & \leq \ 0  
	    \end{align}  
	  We multiply it by ($-2$), and we get       
	    \begin{align}    
	            \ & \ \ \  h^{2} \cdot \left( - 2 \cdot k + k^{2} +  1 \right) \ - \
	              2 \cdot h \cdot ( k^{2} - 1 ) \ + \ \left( 2 \cdot k + k^{2} + 1 \right) &  \geq \ \ & \ 0  \\  
	             \Longleftrightarrow   \ & \ \ 
	             \left[ \,  h \cdot ( k - 1) \ - \ (k + 1 ) \, \right] ^{2} \  \ & \ \ \geq \ \ & \ 0   
	    \end{align}    
	    Obviously, the last inequality is true. Hence, both inequalities  ${\mathsf{ (C)}}$ and  ${\mathsf{ (D)}}$
	     in the lemma  are also correct, for real \ $ t, w $ \, with  $| t | , |w | \geq \frac{1}{2} $.          
	 \end{proof}      
	 \begin{remark}    
	     In the above Lemma \ref{last lemma} in the second inequality $\mathsf{ (D)}$ it occurs 
	     equality if and only if \, $  h \cdot ( k - 1) = k + 1 $, or equivalently if 
	     the  two variables $ t $ and $ w $ fulfill the relation
	  $$  t \ = \ \frac{ w \cdot \sqrt{2} }{\sqrt{ 4 \cdot w^{2} - 1} - 1 } \ , \ \ \ 
	             \text{for} \ \ | w | \ \neq \ \frac{1}{2} \cdot \sqrt{2}
	             \ \ \text{and} \ | t | , |w | \geq \frac{1}{2} \, .                              $$       
	          Note that in this formula the variables \, $ t $ and $ w $ \, can be exchanged. 
	                                          
	   Further we remark that in inequality $\mathsf{ (D)}$, for 
	   \, $ | t | = \frac{1}{2} \cdot \sqrt{2} $ \, or \, $ |w | = \frac{1}{2} \cdot \sqrt{2} $ \, 
	    both sides of  $\mathsf{ (D)}$ have a constant difference of 1.  
	\end{remark}      
	   Now we regard the last case. \\
	  $ \mathsf{ (Case \, c) }$: Let \ $ \sqrt{2} / 2  \, < \, t, w $.  \\
	   From Proposition \ref{entscheidene proposition} we have the estimates  ${\mathsf{ (A)}}$ and ${\mathsf{ (B)}}$,
	   hence we get the inequality    
	            \begin{align*}
	                 \left| 4 \cdot \left< \left( \begin{array}{c} 1 \\ 0 \end{array} \right) \ | \ 
	                \left( \begin{array}{c} 0 \\ 1 \end{array}  \right)   \right> \right|^{2} 
	                 \ \ \leq \ \ \left[ \;  2 + \frac{\sqrt{4 \cdot w^{2} - 1}}{ w^{2}} 
	                                       - \left( \frac{1}{t} \right)^{2} \;  \right]^{2}  \ + \ 
	                 \left[ \;  2 + \frac{\sqrt{4 \cdot t^{2} - 1}}{ t^{2}} 
	                 - \left( \frac{1}{w} \right)^{2} \;  \right]^{2} .                        
	         \end{align*}
	    Together with Lemma  \ref{last lemma} this yields the last step to prove the Cauchy-Schwarz-Bunjakowsky
	     inequality,  since from inequality $ \mathsf{ (C)}$  in Lemma  \ref{last lemma}  follows the 
	     ${\mathsf{ CSB}}$ inequality,   i.e. Theorem \ref{main theorem} finally is confirmed. 
	  \end{proof} 
       We add a few propositions which we do not use 
       (except the lines  \eqref{entscheidene Gleichung},  \eqref{Abchaetzung 1durch s} and 
       \eqref{noch eine Gleichung}) in the proof of Theorem \ref{main theorem}.
       We define the map $G, \ G:  {\mathbb R} \times {\mathbb R} \longrightarrow  {\mathbb R}^{+} $, \\
	  $$ (a,b) \ \longmapsto \ \sqrt{(1-a)^2 + b^2} \ + \ \sqrt{(1-b)^2 + a^{2}} \ + \ \sqrt{a^2 + b^2} \ .  $$
	   We look for the infimum $ \underline{{\cal M}} $ of $ G $. This infimum $ \underline{{\cal M}} $
	   must be a minimum, since it is easy to see that it occurs in the closed  square \\
	  \centerline{$ \{ (a,b) \in  {\mathbb R} \times {\mathbb R} \ | \ 0 \leq a , b \leq 1 \} $. }
	  The values of $ G $ can be interpreted as the sum of three hypotenuses of three rectangle triangles.   
	  We are not able to find $ \underline{{\cal M}} $, but if we
	  restrict our search on the diagonal $ a = b $, we find there with elementary analysis at \   
    $ a = b = (3 - \sqrt{3}) \, / \, 6  \ \approx \ 0.211 $ \ the minimum ${{\cal M}}$, where      
	  $$  \underline{{\cal M}} \ \leq \ {\cal M} \ = \ \sqrt{2 + \sqrt{3}}   \ \ = \ \    
                                \left( \sqrt{2}  +  \sqrt{6}  \right) \, / \, 2  \ \approx \ 1.932 \ . $$ 
	  This also might be the true global minimum  $ \underline{{\cal M}} $ of the map $ G $. Please note that we just
	  considered here the special case $ w = 1 $ from line \eqref{definition R von b} in 
	  Proposition \ref{entscheidene proposition}.     
	 \begin{proposition}  \label{proposition drei Punkt drei}   
	          It holds
	    \begin{equation}   \label{inequality 3.5}
	           \frac{1}{s} \  \leq \  \   
             \begin{cases}  \underline{{\cal M}} \ & \mbox{for} \ \ \  \frac{1}{w} \leq 1       \\    
                   \underline{{\cal M}} \cdot \frac{1}{w} \ & \mbox{for} \ \ \ \frac{1}{w} > 1 \ , \ \ \\
              \end{cases}  \ \quad   
             \frac{1}{v} \  \leq \  \     
             \begin{cases}  \underline{{\cal M}} \ & \mbox{for} \ \ \ \frac{1}{t} \leq 1           \\    
                   \underline{{\cal M}} \cdot \frac{1}{t} \ & \mbox{for} \ \ \ \frac{1}{t} > 1 \ . \\
             \end{cases}  \ \quad        
      \end{equation}   
 	 \end{proposition}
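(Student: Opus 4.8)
The plan is to derive everything from a single parametrised vector identity. First I establish the two auxiliary inequalities that were already invoked in Proposition~\ref{proposition neun}, and then I read off the four claimed bounds by a one-line case distinction. For arbitrary real numbers $a,b$ I decompose the vector $(1,1)$, whose norm is $1/s$, as
\begin{equation}  \label{entscheidene Gleichung}
   (1,1) \ = \ \bigl( (1-a) - {\mathbf i}\cdot b, \; 0 \bigr) \ + \ \bigl( 0, \; (1-b) + {\mathbf i}\cdot a \bigr) \ + \ (a + {\mathbf i}\cdot b)\cdot (1, -{\mathbf i}) \ ,
\end{equation}
an identity that one verifies by comparing the two components. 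Applying the triangle inequality together with absolute homogeneity, and using $\| (1,0) \| = \| (0,1) \| = 1$ as well as $\| (1,-{\mathbf i}) \| = 1/w$, the three summands contribute the moduli $\sqrt{(1-a)^2+b^2}$, $\sqrt{(1-b)^2+a^2}$ and $\sqrt{a^2+b^2}/w$, so that
\begin{equation}  \label{Abchaetzung 1durch s}
   \frac{1}{s} \ \leq \ \sqrt{(1-a)^2 + b^2} \ + \ \sqrt{(1-b)^2 + a^2} \ + \ \frac{\sqrt{a^2+b^2}}{w} \ .
\end{equation}

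The completely analogous decomposition of $(1,{\mathbf i})$, now with the coefficient $a - {\mathbf i}\cdot b$ in front of the vector $(1,-1)$ of norm $1/t$, gives
\begin{equation}  \label{noch eine Gleichung}
   \frac{1}{v} \ \leq \ \sqrt{(1-a)^2 + b^2} \ + \ \sqrt{(1-b)^2 + a^2} \ + \ \frac{\sqrt{a^2+b^2}}{t} \ .
\end{equation}
Arranging the complex coefficients so that exactly the two \emph{symmetric} radicands $(1-a)^2+b^2$ and $(1-b)^2+a^2$ appear is the only genuinely clever point; the remaining verification is mechanical.

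The second step is to recognise that the right-hand side of \eqref{Abchaetzung 1durch s} is precisely $G(a,b)$ once the coefficient of the third radical equals $1$. If $1/w \leq 1$ then $\sqrt{a^2+b^2}/w \leq \sqrt{a^2+b^2}$, whence $1/s \leq G(a,b)$ holds for every pair $(a,b)$; taking the infimum over $(a,b)$ yields $1/s \leq \underline{{\cal M}}$. If instead $1/w > 1$, then $1 \leq 1/w$ bounds the coefficients of the first two radicals, so $1/s \leq (1/w)\cdot G(a,b)$ for every $(a,b)$, and the infimum gives $1/s \leq \underline{{\cal M}}\cdot(1/w)$. The two estimates for $1/v$ follow word for word from \eqref{noch eine Gleichung} with $t$ in place of $w$.

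The only real obstacle is discovering the decomposition \eqref{entscheidene Gleichung} and its counterpart for $(1,{\mathbf i})$; once these identities are on the table, the reduction to $\inf G = \underline{{\cal M}}$ and the two-case split are immediate, and no property of the norm beyond the triangle inequality and absolute homogeneity is used.
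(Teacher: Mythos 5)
Your proposal is correct and follows essentially the same route as the paper: the identical decomposition of $(1,1)$ (and of $(1,{\mathbf i})$), the triangle inequality, and the case distinction on whether $\|(1,-{\mathbf i})\|=1/w$ (resp.\ $1/t$) exceeds $1$. You merely spell out the two-case absorption into $G(a,b)$ that the paper leaves implicit in the sentence ``Either $\|(1,-{\mathbf i})\|\leq 1$ or $\|(1,-{\mathbf i})\|>1$.''
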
 
	   \begin{proof}   We show the first of the two inequalities. Please see the equation 
	       \begin{align}   \label{entscheidene Gleichung} 
	             \left( \begin{array}{c} 1 \\ 1 \end{array} \right) 
	             \ = \ \left[(1-a) - {\mathbf{ i}} \cdot b \right] \cdot
	             \left(\begin{array}{c} 1 \\ 0 \end{array}  \right) \ + \ 
	                  \ \left[(1-b) + {\mathbf{ i}} \cdot a \right] \cdot
	             \left(\begin{array}{c} 0 \\ 1 \end{array}  \right) \ + \ 
	             \ \left[a + {\mathbf{ i}} \cdot b \right] \cdot
	             \left(\begin{array}{c} 1 \\ -{\mathbf{ i}} \end{array}  \right) 
	         \end{align}  
	       which is true for arbitrary $ a , b \in   {\mathbb R} $. By the triangle inequality, we get 
	        \begin{equation}     \label{Abchaetzung 1durch s}  
	          1/s \ = \ \| ( 1, 1) \| \ \leq \ \sqrt{(1-a)^2 + b^2} \ + \  \sqrt{(1-b)^2 + a^2} \ + \ 
	          \sqrt{a^2 + b^2} \cdot  \| ( 1, -{\mathbf{ i}} ) \|  .      
	        \end{equation}   
	       Either $ \| ( 1, -{\mathbf{ i}} ) \| \leq 1$ or $ \| ( 1, -{\mathbf{ i}} ) \| > 1$.
	       It follows the first inequality in this proposition.    
	       The other inequality uses the corresponding equation of the next line \eqref{noch eine Gleichung}.    
	\begin{align}  \label{noch eine Gleichung}        
      (1, {\mathbf{ i}} ) \ = \ \left[(1-a) + {\mathbf{ i}} \cdot b \right] \cdot (1, 0 ) \ + \ 
      \left[ {\mathbf{ i}} \cdot (1-b) + a \right] \cdot (0, 1 ) \ + \ \left[a - {\mathbf{ i}} \cdot b \right] \cdot              (1,-1)  
  \end{align}  
 \end{proof}
   \begin{proposition}  \label{schon wieder eine Proposition}
	               Let \ $ s > 1/2 $ \ and \ $ v > 1/2 $, \ respectively. It holds   
	              $$   s  \, \leq \, \frac{s}{2 \cdot s - 1 } \ , \ \ \text{and} \ \
	                   v  \, \leq \, \frac{v}{2 \cdot v - 1 } \ , \  \ \text{respectively} \ .      $$  
	  \end{proposition}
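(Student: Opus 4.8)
The plan is to reduce the claimed inequality to the single geometric fact $1/s + 1/t \ge 2$ (and its mirror $1/v + 1/w \ge 2$), which the triangle inequality delivers at once, and then to combine it with the normalisation $s \le t$ (resp.\ $v \le w$) fixed in Lemma \ref{zweites Lemma}. First I would record that, since $s > 1/2$, both $s > 0$ and $2\cdot s - 1 > 0$; multiplying $s \le \frac{s}{2\cdot s-1}$ by the positive quantity $2\cdot s-1$ and dividing by $s>0$ shows that the assertion is equivalent to $2\cdot s - 1 \le 1$, i.e.\ to $s \le 1$. So the real content is the bound $s \le 1$, equivalently $1/s \ge 1$.

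To reach it I would use \eqref{normen}, where $1/s = \|(1,1)\|$ and $1/t = \|(1,-1)\|$, together with the fact that $(1,0)$ is a unit vector. Writing $(2,0) = (1,1) + (1,-1)$ and applying absolute homogeneity and the triangle inequality gives
\[
   2 \;=\; 2\cdot\|(1,0)\| \;=\; \|(2,0)\| \;\le\; \|(1,1)\| + \|(1,-1)\| \;=\; \frac1s + \frac1t .
\]
Clearing denominators (both $s,t>0$) turns this into $s + t \ge 2\cdot s\cdot t$, hence $t\cdot(1-2\cdot s) \ge -s$; since $1-2\cdot s<0$, division reverses the sign and produces the decisive estimate $t \le \frac{s}{2\cdot s-1}$. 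Invoking $s \le t$ from Lemma \ref{zweites Lemma} then yields $s \le t \le \frac{s}{2\cdot s-1}$, which is the first assertion.

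The second assertion is obtained by the identical computation with $(1,1),(1,-1)$ replaced by $(1,{\mathbf{i}}),(1,-{\mathbf{i}})$: from $(2,0) = (1,{\mathbf{i}}) + (1,-{\mathbf{i}})$ one reads off $1/v + 1/w \ge 2$, whence $w \le \frac{v}{2\cdot v-1}$, and $v \le w$ from Lemma \ref{zweites Lemma} closes the argument.

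I expect the only genuine obstacle to be conceptual rather than computational: the bare inequality $s \le \frac{s}{2\cdot s-1}$ is in fact false for $s>1$, so a purely algebraic argument cannot work — the proof must import the geometric input $1/s+1/t\ge2$ and the standing ordering $s \le t$. The one point demanding care is the sign reversal when dividing by $1-2\cdot s<0$, which is precisely what upgrades the innocuous bound $s+t\ge 2\cdot s\cdot t$ into the sharp estimate $t \le \frac{s}{2\cdot s-1}$.
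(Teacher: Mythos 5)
Your argument is correct and is in substance the paper's own: the identity $(2,0)=(1,1)+(1,-1)$ is just the rescaling by $2/s$ of the decomposition $(s,0)=\tfrac12\cdot(s,s)+\tfrac12\cdot(s,-s)$ that the paper uses (via Proposition \ref{proposition eins}, after reducing the claim to $s\le 1$), and both proofs rest on the same two inputs, namely the triangle inequality and the ordering $s\le t$ (resp.\ $v\le w$) from Lemma \ref{zweites Lemma}. The only cosmetic difference is that you solve $s+t\ge 2\cdot s\cdot t$ for $t$ to obtain $t\le s/(2\cdot s-1)$ before invoking $s\le t$, whereas the paper bounds $s/t\le 1$ to get $s\le 1$ first.
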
  
	  \begin{proof} We prove $ s  \leq  \frac{s}{2 \cdot s - 1 } $. This is equivalent to $ s \, \leq \, 1 $. \ 
	         Due to the proof of the following proposition this is always true.
	  \end{proof}   
 \begin{proposition}  \label{proposition eins}  
	       It holds both 
	          $$    s \ \leq \ 1  \ \ \ \text{and} \ \ \  v \ \leq \ 1 \ .                         $$  
 \end{proposition}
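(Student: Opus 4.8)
The plan is to restate $s \le 1$ in the equivalent form $1/s \ge 1$, i.e.\ $\|(1,1)\| \ge 1$, and to obtain this from a single averaging identity together with the ordering $s \le t$ that Lemma \ref{zweites Lemma} allows us to assume. The point is that both halves of the claim are \emph{lower} bounds on $1/s$ and $1/v$, so, in contrast to Lemma \ref{allererstes Lemma} (which produced the upper bound $1/s \le 2$ by writing $(s,s)$ as a sum of the axis vectors), I would now expand an \emph{axis} vector as an average of the rotated vectors $(1,\pm 1)$.

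First I would write
$$ \left( \begin{array}{c} 1 \\ 0 \end{array} \right) \ = \ \frac{1}{2} \cdot \left[ \left( \begin{array}{c} 1 \\ 1 \end{array} \right) + \left( \begin{array}{c} 1 \\ -1 \end{array} \right) \right] , $$
apply the triangle inequality, and insert the normalizations from line \eqref{normen}, namely $\|(1,1)\| = 1/s$ and $\|(1,-1)\| = 1/t$, together with $\|(1,0)\| = 1$. This yields $1 \le \tfrac{1}{2}\,(1/s + 1/t)$, that is $1/s + 1/t \ge 2$. The analogous decomposition $(1,0) = \tfrac{1}{2}[(1,\mathbf{i}) + (1,-\mathbf{i})]$ gives $1/v + 1/w \ge 2$.

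Next I would invoke the standing assumption of Lemma \ref{zweites Lemma}, $s \le t$ and $v \le w$, which for positive numbers reads $1/s \ge 1/t$ and $1/v \ge 1/w$. Feeding $1/t \le 1/s$ into the first inequality gives $2/s \ge 1/s + 1/t \ge 2$, hence $1/s \ge 1$, i.e.\ $s \le 1$; the same step with $1/w \le 1/v$ gives $v \le 1$, which completes the argument.

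The only genuine subtlety — and the step I would guard most carefully — is the appeal to the ordering $s \le t$: it is truly needed. Without it the statement is false, since for a suitable complex inner product on $\mathbb{C}^{2}$ one can arrange $\|(1,1)\| < 1$, forcing $s > 1$; such a configuration simply has $t < s$ and is ruled out by Lemma \ref{zweites Lemma}. Once the ordering is in place the proof is a one-line triangle-inequality estimate dual to Lemma \ref{allererstes Lemma}, and it is precisely this bound $s \le 1$ (combined with $s > 1/2$ from Lemma \ref{allererstes Lemma}) that then feeds Proposition \ref{schon wieder eine Proposition}.
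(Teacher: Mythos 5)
Your proof is correct and is essentially the paper's own argument: the paper uses the identity $(s,0) = \frac{1}{2}\cdot(s,s) + \frac{1}{2}\cdot(s,-s)$, which is exactly your decomposition $(1,0) = \frac{1}{2}\cdot\left[(1,1)+(1,-1)\right]$ rescaled by $s$, and then applies the triangle inequality together with the same appeal to the ordering $s \leq t$ (there in the form $\|(s,-s)\| \leq \|(t,-t)\| = 1$, in your version as $1/t \leq 1/s$). Your closing observation that the ordering from Lemma \ref{zweites Lemma} is genuinely indispensable is a correct and worthwhile remark, but the core argument coincides with the paper's.
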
  
  \begin{proof}  
	  Please see the equation \ $(s,0) = 1/2 \cdot (s,s) + 1/2 \cdot (s,-s)$. 
	       Note the norms \  $ \| (s,0) \| = s, \ \| (s,s) \| = 1 = \| (t,-t) \| $. 
	                Since $ s \leq t$ we have $ \| (s,-s) \| \leq 1 $.  Now apply the triangle inequality 
	                to the equation.     
	\end{proof}
	\begin{proposition}  Let \ $ s, t > 1/2 $. \ \    The following two tripels   \\
	     $ (t,-t) , (1,0), \left(\frac{t}{2 \cdot t - 1}, \frac{t}{2 \cdot t - 1} \right) $              \ and \
	     $ (s,s), (1,0), \left(\frac{s}{2 \cdot s - 1}, \frac{-s}{2 \cdot s - 1} \right) $    
	     are collinear. (On two different lines, of course, except the special cases \ $ 2 \cdot s \cdot t = s +t $). 
 	\end{proposition}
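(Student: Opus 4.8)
The plan is to verify collinearity directly from its definition: three points $A,B,C$ in $\mathbb{C}^{2}$ are collinear precisely when the difference $C-A$ is a scalar multiple of $B-A$, so that $C$ lies on the affine line $\{ A + \lambda \cdot (B-A) \ | \ \lambda \in \mathbb{C} \}$ determined by $A$ and $B$. Since $s,t > 1/2$, the quantities $2 \cdot t - 1$ and $2 \cdot s - 1$ are strictly positive, so every denominator occurring below is nonzero and the computation is legitimate.

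First I would treat the triple $(t,-t), (1,0), \left( \frac{t}{2 \cdot t - 1}, \frac{t}{2 \cdot t - 1} \right)$. Taking $A=(t,-t)$ and $B=(1,0)$ gives the direction $B-A=(1-t, \, t)$. A short computation of the two coordinates of the third point minus $A$, namely $\frac{t}{2 \cdot t - 1} - t = \frac{2 \cdot t \cdot (1-t)}{2 \cdot t - 1}$ and $\frac{t}{2 \cdot t - 1} + t = \frac{2 \cdot t^{2}}{2 \cdot t - 1}$, shows that $C - A = \frac{2 \cdot t}{2 \cdot t - 1} \cdot (1-t, \, t) = \frac{2 \cdot t}{2 \cdot t - 1} \cdot (B-A)$. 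Hence the third point lies on the line through the first two, and the triple is collinear. The second triple $(s,s), (1,0), \left( \frac{s}{2 \cdot s - 1}, \frac{-s}{2 \cdot s - 1} \right)$ is handled by the identical computation with $s$ in place of $t$ and the obvious sign change in the second coordinate; one finds $C - A = \frac{2 \cdot s}{2 \cdot s - 1} \cdot (1-s, \, -s)$, so this triple is collinear as well.

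For the parenthetical claim about ``two different lines'', I would observe that both lines pass through the common point $(1,0)$, with direction vectors $(1-t, \, t)$ and $(1-s, \, -s)$ respectively. Two lines through a common point coincide exactly when their directions are parallel, i.e. when the determinant $(1-t) \cdot (-s) - t \cdot (1-s)$ vanishes. Expanding gives $(1-t) \cdot (-s) - t \cdot (1-s) = 2 \cdot s \cdot t - s - t$, so the two lines are distinct precisely unless $2 \cdot s \cdot t = s + t$, which is exactly the exceptional case stated in the proposition.

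I expect no genuine obstacle here: every step is an elementary affine-geometry verification, and the only care required is the algebraic bookkeeping of the fractions (whose denominators are guaranteed nonzero by $s,t > 1/2$) together with the small observation that both lines share the point $(1,0)$, which is what makes the coincidence condition collapse to the single relation $2 \cdot s \cdot t = s + t$.
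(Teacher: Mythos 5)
Your proof is correct and takes essentially the same approach as the paper: the paper also verifies collinearity by exhibiting one difference vector as an explicit scalar multiple of another (it writes $(t,-t)-(1,0) = (1-2t)\cdot\bigl[\tfrac{t}{2t-1}\cdot(1,1)-(1,0)\bigr]$, i.e.\ it uses $(1,0)$ as the base point where you use $(t,-t)$, but the computation is the same). You additionally verify the parenthetical claim about the two lines coinciding exactly when $2\cdot s\cdot t = s+t$, which the paper states without proof; that check is correct.
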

	\begin{proof} The first statement is proven by \ 
	$ (t,-t) - (1,0) \ = \ ( 1 - 2 \cdot t ) \cdot \left[ \frac{t}{2 \cdot t - 1 } \cdot (1,1) - (1,0) \right] $.  
	\ The second statement uses \ 
	$ (s,s) - (1,0) \ = \  ( 1 - 2 \cdot s ) \cdot \left[ \frac{s}{2 \cdot s - 1 } \cdot (1,-1) - (1,0) \right] $.  
	\end{proof}  
	  \begin{proposition} $ \min \{t,w\} \ \leq \ \sqrt{2} $.
	  \end{proposition}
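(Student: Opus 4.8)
The plan is to find a simple linear identity among the diagonal unit-directions that isolates exactly the two quantities $t$ and $w$, and then to read off the conclusion from the triangle inequality. The natural candidate is the difference of the two vectors whose norms are $1/t$ and $1/w$, namely $(1,-1)$ and $(1,-{\mathbf{ i}})$. First I would record the identity
$$ (1,-1) \ - \ (1, -{\mathbf{ i}}) \ = \ (0, {\mathbf{ i}} - 1) \ , $$
and observe that, by the absolute homogeneity of the norm together with $\| (0,1) \| = 1$, its right-hand side has norm $\| (0, {\mathbf{ i}} - 1) \| = | {\mathbf{ i}} - 1 | \cdot \| (0,1) \| = \sqrt{2}$.

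Next I would apply the triangle inequality to this identity. Since by the definitions in line \eqref{normen} we have $\| (1,-1) \| = 1/t$ and $\| (1,-{\mathbf{ i}}) \| = 1/w$, this yields
$$ \sqrt{2} \ = \ \| (0, {\mathbf{ i}} - 1) \| \ \leq \ \| (1,-1) \| + \| (1, -{\mathbf{ i}}) \| \ = \ \frac{1}{t} + \frac{1}{w} \ . $$

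Finally the conclusion follows by a short contradiction argument: if both $t > \sqrt{2}$ and $w > \sqrt{2}$ held, then $1/t + 1/w < 2/\sqrt{2} = \sqrt{2}$, contradicting the inequality just obtained. Hence at least one of $t$ and $w$ is at most $\sqrt{2}$, that is $\min\{t,w\} \leq \sqrt{2}$. I expect no real obstacle here. Unlike the earlier propositions, which required the delicate minimisation of the function $ {\cal R}(b) $ and the substitution tricks of Lemma \ref{last lemma}, this statement reduces to spotting the single clean identity above; after that only the triangle inequality and an elementary estimate for reciprocals are needed. The one point that deserves a line of verification is $ | {\mathbf{ i}} - 1 | = \sqrt{2} $, together with the fact that the normalising factors are exactly $1/t$ and $1/w$, both of which are immediate from the definitions.
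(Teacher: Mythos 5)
Your proof is correct, but it takes a genuinely different route from the paper. The paper fixes the smaller of $t,w$ directly: assuming $w\leq t$, it writes $\tfrac{1}{2}\sqrt{2}\cdot w\cdot(1,0)$ as $e^{{\mathbf{ i}}\cdot\pi/4}$ times the average of the two vectors $(-{\mathbf{ i}}\cdot w,{\mathbf{ i}}\cdot w)$ and $(w,-{\mathbf{ i}}\cdot w)$, notes that the first has norm $\|(w,-w)\|\leq\|(t,-t)\|=1$ precisely because $w\leq t$ while the second is a unit vector, and concludes $\tfrac{1}{2}\sqrt{2}\cdot w\leq 1$ by the triangle inequality; a symmetric identity handles the case $t\leq w$. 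You instead prove the single inequality $\sqrt{2}\leq 1/t+1/w$ from the identity $(1,-1)-(1,-{\mathbf{ i}})=({\mathbf{ i}}-1)\cdot(0,1)$ and then extract the conclusion by the elementary observation that two reciprocals cannot both be below $1/\sqrt{2}$ if their sum is at least $\sqrt{2}$. Your route avoids the case split on which of $t,w$ is smaller and uses only one application of the triangle inequality; it is in fact a special case of the paper's later proposition asserting $\sqrt{2}\leq 1/\alpha+1/\gamma$ for $\alpha\in\{s,t\}$, $\gamma\in\{v,w\}$, which the paper proves by essentially your identity, so your argument shows that this later proposition already implies the one at hand. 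What the paper's decomposition buys in exchange is a bound on $\min\{t,w\}$ obtained directly rather than through an averaging argument, in the same style as its other decomposition-based propositions.
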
  
	  \begin{proof} 
	     We use $ e^{{\mathbf{ i}} \cdot \pi/4} $ 
	          from the complex unit circle, where
	   $$     e^{{\mathbf{ i}} \cdot 45 ^ \circ} \ = \ \ e^{{\mathbf{ i}} \cdot \pi/4}
	          \ = \ \cos (\pi/4) + {\mathbf{ i}} \cdot \sin (\pi/4) 
	          \ = \    \frac{1}{2} \cdot \sqrt{2} \ + \ {\mathbf{ i}} \cdot \frac{1}{2} \cdot \sqrt{2} \ .     $$  
	       First assume $ w \leq t $.  We write
	   \begin{displaymath}   \label{nocheinegleichung}
	            \frac{1}{2} \cdot \sqrt{2} \cdot w \cdot \left( \begin{array}{c} 1 \\ 0 \end{array} \right) 
	        \ = \ \  \left(  \frac{1}{2} \cdot \sqrt{2} \ + \ {\mathbf{ i}} \cdot \frac{1}{2} \cdot \sqrt{2}  \right)
	              \ \cdot \  \left[ \frac{1}{2} \cdot
	               \left(  \begin{array}{c} -{\mathbf{ i}} \cdot w \\ {\mathbf{ i}} \cdot w  \end{array}  \right) 
	                         \ + \  \frac{1}{2} \ \cdot \
	                         \left( \begin{array}{c} w \\ -{\mathbf{ i}} \cdot w  \end{array} \right) \right] \ .
	   \end{displaymath}  
	     By $ w \leq t $, we have the norms  
	     $ \| (-{\mathbf{ i}} \cdot w, {\mathbf{ i}} \cdot w) \| =  \| ( w, - w) \| \leq \| ( t, - t) \| = 1 $,
	     and \ $ \| ( w, -{\mathbf{ i}} \cdot w) \| = 1 $. \ By the triangle inequality it follows \ 
	     $   \frac{1}{2} \cdot \sqrt{2} \cdot w  \leq \frac{1}{2} + \frac{1}{2} $ . \ The case $ t \leq w $
	     is treated with the corresponding equation \\  
	     \centerline  {$ \frac{1}{2} \cdot \sqrt{2} \cdot t \cdot \left( 1, 0 \right) \ \ = \ \  
	     \left( \ \frac{1}{2} \cdot \sqrt{2} \ + \ {\mathbf{ i}} \cdot \frac{1}{2} \cdot \sqrt{2}   \right)
	      \ \cdot \ \left[ \frac{1}{2} \cdot \left( -{\mathbf{ i}} \cdot t , {\mathbf{ i}} \cdot t \right) 
	               \ + \ \frac{1}{2} \cdot \left( t , -{\mathbf{ i}} \cdot t \right) \right] $ \ .            } 
	  \end{proof}
   \begin{proposition}   
	              There are two inequalities \
	        $$     \frac{1}{s} \ \leq \ \frac{1}{v} + \frac{1}{t} + \frac{1}{w} \ , \quad \text{and} \quad \ 
	          \frac{1}{v} \ \leq \ \frac{1}{s} + \frac{1}{t} + \frac{1}{w} \ .        $$  
	 \end{proposition}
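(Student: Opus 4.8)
The plan is to proceed exactly as in the proof of Proposition~\ref{proposition drei Punkt drei}: to bound the norm of one of the four basic vectors, I would write that vector as a linear combination of the other three with coefficients of modulus $1$, and then invoke absolute homogeneity together with the triangle inequality. Recall from line~\eqref{normen} that $1/s = \|(1,1)\|$, $1/t = \|(1,-1)\|$, $1/v = \|(1,{\mathbf{i}})\|$ and $1/w = \|(1,-{\mathbf{i}})\|$, so each asserted inequality is simply a statement about these four norms.

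For the first inequality I would exhibit the identity
\[
   (1,1) \ = \ (1,{\mathbf{i}}) \ - \ (1,-1) \ + \ (1,-{\mathbf{i}}) \, ,
\]
which is checked at once by comparing the two coordinates (namely $1 - 1 + 1 = 1$ in the first slot and ${\mathbf{i}} + 1 - {\mathbf{i}} = 1$ in the second). Since the three coefficients $1, -1, 1$ all have modulus $1$, absolute homogeneity gives $\|-(1,-1)\| = \|(1,-1)\|$, and the triangle inequality then yields $\|(1,1)\| \leq \|(1,{\mathbf{i}})\| + \|(1,-1)\| + \|(1,-{\mathbf{i}})\|$, which is precisely $1/s \leq 1/v + 1/t + 1/w$.

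For the second inequality I would use the analogous decomposition
\[
   (1,{\mathbf{i}}) \ = \ {\mathbf{i}} \cdot (1,1) \ - \ {\mathbf{i}} \cdot (1,-1) \ + \ (1,-{\mathbf{i}}) \, ,
\]
again verified coordinatewise. Here the coefficients ${\mathbf{i}}, -{\mathbf{i}}, 1$ all have modulus $1$, so absolute homogeneity turns each scaled term into a plain norm, and the triangle inequality gives $\|(1,{\mathbf{i}})\| \leq \|(1,1)\| + \|(1,-1)\| + \|(1,-{\mathbf{i}})\|$, i.e. $1/v \leq 1/s + 1/t + 1/w$.

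The only genuinely nontrivial step is finding the right unit-modulus coefficients; once the two identities are written down, everything else is a single coordinate check and one application of the triangle inequality, so I expect no real obstacle. The two coordinate conditions form a system with a one-parameter family of solutions, and the point is to pick the free parameter so that \emph{all three} coefficients land on the unit circle: this is exactly what forces the constant $1$ in front of each term on the right-hand side, rather than some larger factor that would weaken the estimate.
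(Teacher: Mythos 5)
Your proposal is correct and follows essentially the same route as the paper: the paper proves $1/v \leq 1/s + 1/t + 1/w$ from the identity $(1,{\mathbf{i}}) = (1,1) + (1,-1) - (1,-{\mathbf{i}})$ via the triangle inequality, and your decompositions (with unit-modulus coefficients $1,-1,1$ and ${\mathbf{i}},-{\mathbf{i}},1$ respectively) are equally valid instances of exactly the same method. The only cosmetic difference is your choice of coefficients for the second identity, which changes nothing of substance.
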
 
	 \begin{proof}   For instance, \, $ 1/v \leq 1/s + 1/t + 1/w $ \, is proven by the following equation, \\
	     \centerline{ $ (1, {\mathbf{ i}}) = (1, 1) +  (1, -1 ) - (1, -{\mathbf{ i}}) $. } 
	 \end{proof}
   \begin{proposition}   
	    $$  \text{It holds} \qquad \ \frac{1}{s} - \frac{1}{t} \ \leq \ 2 \ \leq \ \frac{1}{s} + \frac{1}{t} \ \qquad
	        \text{and} \qquad  \ \frac{1}{v} - \frac{1}{w} \ \leq \ 2 \ \leq \ \frac{1}{v} + \frac{1}{w} \  .  $$  
	  \end{proposition}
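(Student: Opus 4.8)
The plan is to derive all four inequalities directly from the triangle inequality and the absolute homogeneity of the norm, using only the normalizations recorded in line \eqref{normen}, namely $\|(1,1)\| = 1/s$, $\|(1,-1)\| = 1/t$, $\|(1,\mathbf{i})\| = 1/v$, $\|(1,-\mathbf{i})\| = 1/w$, together with the fact that $(1,0)$ and $(0,1)$ are unit vectors, so $\|(1,0)\| = \|(0,1)\| = 1$. No deeper structure of the norm is needed.

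For the two lower bounds I would observe the additive identities
\[
(1,1) + (1,-1) = 2 \cdot (1,0), \qquad (1,\mathbf{i}) + (1,-\mathbf{i}) = 2 \cdot (1,0).
\]
Applying the triangle inequality to the left-hand sides and absolute homogeneity to the right-hand sides gives $2 = \|2 \cdot (1,0)\| \leq \|(1,1)\| + \|(1,-1)\| = 1/s + 1/t$, and likewise $2 \leq 1/v + 1/w$. These are precisely the right-hand inequalities of the proposition.

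For the two upper bounds on the differences I would instead split off a single basis vector:
\[
(1,1) = (1,-1) + 2 \cdot (0,1), \qquad (1,\mathbf{i}) = (1,-\mathbf{i}) + 2\mathbf{i} \cdot (0,1).
\]
The triangle inequality then yields $1/s \leq 1/t + 2\,\|(0,1)\| = 1/t + 2$ and $1/v \leq 1/w + |2\mathbf{i}| \cdot \|(0,1)\| = 1/w + 2$, that is, $1/s - 1/t \leq 2$ and $1/v - 1/w \leq 2$.

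I expect no genuine obstacle here: each of the four claims reduces to an elementary vector identity followed by one application of the triangle inequality. The only point requiring a moment's care is the coefficient bookkeeping, so that the homogeneity factor evaluates to exactly $2$; this is immediate once one notes $\|(0,1)\| = \|(1,0)\| = 1$ and, in the imaginary case, $|2\mathbf{i}| = 2$.
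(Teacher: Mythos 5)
Your proof is correct and is essentially the paper's argument: the paper simply uses the single identity $(1,1) = (2,0) - (1,-1)$ read in both directions with the triangle inequality (giving $2 \leq 1/s + 1/t$ and $1/s \leq 2 + 1/t$ at once), whereas you split off $2\cdot(0,1)$ instead of $2\cdot(1,0)$ for the upper bound --- a purely cosmetic difference. The coefficient bookkeeping and the use of the normalizations from line \eqref{normen} match the paper's intent exactly.
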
 
	  \begin{proof}   For the first inequality use two times the equation  \ \ $ (1, 1) = (2, 0) -  (1, -1) $. 
	  \end{proof}
	  \begin{proposition} Let \ $ \alpha \in \{s, t \} $, \ and \ $ \gamma \in \{v, w \} $.  
	    $$  \text{It holds} \qquad \ \left| \frac{1}{\alpha} - \frac{1}{\gamma} \right| \ \leq \ \sqrt{2} \ \leq \ 
	                                        \frac{1}{\alpha} + \frac{1}{\gamma}  \  .  $$  
	  \end{proposition}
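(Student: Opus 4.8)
The plan is to prove all four cases ($\alpha \in \{s,t\}$ paired with $\gamma \in \{v,w\}$) simultaneously by exhibiting a single linear identity in $ {\mathbb C}^{2} $ and applying both forms of the triangle inequality. First I would recall from line \eqref{normen} that $ 1/s = \|(1,1)\| $, $ 1/t = \|(1,-1)\| $, $ 1/v = \|(1,{\mathbf{ i}})\| $ and $ 1/w = \|(1,-{\mathbf{ i}})\| $, together with the normalization $ \|(0,1)\| = 1 $. Hence for $ \alpha \in \{s,t\} $ we may write $ 1/\alpha = \|(1,\sigma)\| $ with $ \sigma = +1 $ when $ \alpha = s $ and $ \sigma = -1 $ when $ \alpha = t $; likewise for $ \gamma \in \{v,w\} $ we write $ 1/\gamma = \|(1,\tau\,{\mathbf{ i}})\| $ with $ \tau = +1 $ when $ \gamma = v $ and $ \tau = -1 $ when $ \gamma = w $.

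The key observation is the identity $ (1,\sigma) - (1,\tau\,{\mathbf{ i}}) = (\sigma - \tau\,{\mathbf{ i}}) \cdot (0,1) $, which holds for every choice of signs. By absolute homogeneity its norm equals $ |\sigma - \tau\,{\mathbf{ i}}| \cdot \|(0,1)\| = \sqrt{\sigma^{2} + \tau^{2}} = \sqrt{2} $, because $ \sigma,\tau \in \{\pm 1\} $. This is precisely why the constant $ \sqrt{2} $ is uniform across all four sign combinations. Then the forward triangle inequality yields $ \sqrt{2} = \|(1,\sigma) - (1,\tau\,{\mathbf{ i}})\| \leq \|(1,\sigma)\| + \|(1,\tau\,{\mathbf{ i}})\| = 1/\alpha + 1/\gamma $, while the reverse triangle inequality yields $ \bigl| 1/\alpha - 1/\gamma \bigr| = \bigl| \, \|(1,\sigma)\| - \|(1,\tau\,{\mathbf{ i}})\| \, \bigr| \leq \|(1,\sigma) - (1,\tau\,{\mathbf{ i}})\| = \sqrt{2} $. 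Together these are exactly the two claimed inequalities.

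I expect no genuine obstacle here: the proposition dissolves into one application of the triangle inequality once the correct difference vector $ (0,\sigma - \tau\,{\mathbf{ i}}) $ is identified. The only point worth flagging is the uniformity of the bound. Although there are nominally four cases with two inequalities each, the quantity $ |\sigma - \tau\,{\mathbf{ i}}| $ does not depend on the signs $ \sigma,\tau $, so a single computation settles everything at once and no genuine case distinction is required.
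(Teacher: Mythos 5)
Your proof is correct and follows essentially the same route as the paper: both rest on the observation that the vectors $(1,\pm 1)$ and $(1,\pm{\mathbf{ i}})$ differ by a complex multiple of modulus $\sqrt{2}$ of the unit vector $(0,1)$, followed by the triangle inequality. The only cosmetic difference is that you treat all four sign combinations uniformly and invoke the reverse triangle inequality for the left-hand bound, whereas the paper uses the standing assumption $s<t$, $v<w$ to reduce to three extreme cases and applies only the forward triangle inequality to rearranged identities such as $(1,{\mathbf{ i}})=(1,-1)+(1+{\mathbf{ i}})\cdot(0,1)$.
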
 
	   \begin{proof}   We need to show \ $ \frac{1}{v} - \frac{1}{t} \leq \sqrt{2} $ \ and \
	          $ \frac{1}{s} - \frac{1}{w} \leq \sqrt{2} $ \ and \ $ \sqrt{2} \leq \ \frac{1}{t} + \frac{1}{w} $.  \\ 
	    Note $ |1 + {\mathbf{ i}}| = |1 - {\mathbf{ i}}| = \sqrt{2} $, \ and consider 
	      \begin{displaymath}   \label{proegfhehastg} 
	         \   \left( \begin{array}{c} 1 \\ {\mathbf{ i}}  \end{array} \right)    
	              \ = \  \left(  \begin{array}{c} 1 \\ -1 \end{array}  \right) \ + \ 
	                   ( 1 + {\mathbf{ i}}) \cdot  \left(  \begin{array}{c} 0 \\ 1  \end{array}  \right) \ .       
	       \end{displaymath}  
	       Hence it follows \ $ \frac{1}{v} \ \leq \ \frac{1}{t} + \sqrt{2} $. \
	    Please see also \ \ $ (1,1) = ( 1 ,-{\mathbf{ i}}) + (1 + {\mathbf{ i}}) \cdot (0,1) $ \ and \  
	                      \ $  (1 - {\mathbf{ i}}) \cdot (0,1)  = ( 1 ,-{\mathbf{ i}}) - (1,-1) $ .
	   \end{proof}
	   \begin{proposition}  \label{noch eine Proposition}
	    $$  \text{It holds} \quad \ s \geq \sqrt{2} \ \cdot \left[ \frac{v \cdot w}{v + w} \right] \ \ \text{and} \ \
	                              \ v \geq \sqrt{2} \ \cdot \left[ \frac{s \cdot t}{s + t} \right] \ . $$  
	  \end{proposition}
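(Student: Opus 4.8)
The plan is to reduce both claimed inequalities to statements purely about the four norms fixed in \eqref{normen}, namely $1/s = \|(1,1)\|$, $1/t = \|(1,-1)\|$, $1/v = \|(1,\mathbf{i})\|$, and $1/w = \|(1,-\mathbf{i})\|$, and then to establish those by the triangle inequality applied to a single well-chosen vector identity, exactly in the spirit of the preceding propositions.

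First I would rewrite the first claim. Since $s, v, w$ are positive, the inequality $s \geq \sqrt{2}\,\frac{vw}{v+w}$ is equivalent, upon taking reciprocals and using $\frac{v+w}{vw} = \frac{1}{v} + \frac{1}{w}$, to $\sqrt{2}\cdot \frac{1}{s} \leq \frac{1}{v} + \frac{1}{w}$. In terms of norms this reads $\sqrt{2}\,\|(1,1)\| \leq \|(1,\mathbf{i})\| + \|(1,-\mathbf{i})\|$. The key step is then the identity
\[
\left( \begin{array}{c} 1 \\ 1 \end{array} \right) \ = \ \frac{1-\mathbf{i}}{2}\cdot \left( \begin{array}{c} 1 \\ \mathbf{i} \end{array} \right) \ + \ \frac{1+\mathbf{i}}{2}\cdot \left( \begin{array}{c} 1 \\ -\mathbf{i} \end{array} \right),
\]
which one verifies componentwise. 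Applying the triangle inequality and absolute homogeneity, and using $|1-\mathbf{i}| = |1+\mathbf{i}| = \sqrt{2}$, gives $\frac{1}{s} \leq \frac{\sqrt{2}}{2}\left(\frac{1}{v} + \frac{1}{w}\right)$, which is the desired inequality.

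The second claim is handled in the same fashion. Rewriting $v \geq \sqrt{2}\,\frac{st}{s+t}$ as $\sqrt{2}\cdot\frac{1}{v} \leq \frac{1}{s} + \frac{1}{t}$, that is, $\sqrt{2}\,\|(1,\mathbf{i})\| \leq \|(1,1)\| + \|(1,-1)\|$, I would use the companion identity
\[
\left( \begin{array}{c} 1 \\ \mathbf{i} \end{array} \right) \ = \ \frac{1+\mathbf{i}}{2}\cdot \left( \begin{array}{c} 1 \\ 1 \end{array} \right) \ + \ \frac{1-\mathbf{i}}{2}\cdot \left( \begin{array}{c} 1 \\ -1 \end{array} \right),
\]
and conclude by the triangle inequality precisely as before.

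I expect no genuine difficulty here: once the two identities are in hand, both inequalities drop out immediately. The only real work is locating the correct coefficients, which one obtains by solving the small linear system that expresses the target unit vector as a combination of the other two; the fact that these coefficients turn out to have modulus $\frac{1}{\sqrt{2}}$ is exactly what produces the factor $\sqrt{2}$ appearing in the statement.
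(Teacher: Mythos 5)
Your proof is correct and is essentially identical to the paper's: the two decompositions you write down are exactly the paper's identities $(1,1) = \frac{1-\mathbf{i}}{2v}(v,\mathbf{i}v) + \frac{1+\mathbf{i}}{2w}(w,-\mathbf{i}w)$ and $(1,\mathbf{i}) = \frac{1+\mathbf{i}}{2s}(s,s) + \frac{1-\mathbf{i}}{2t}(t,-t)$, merely rescaled so that the summands are $(1,\pm\mathbf{i})$ and $(1,\pm 1)$ rather than the corresponding unit vectors. The triangle-inequality conclusion and the reciprocal rewriting are the same in both arguments.
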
 
	   \begin{proof} 
	      We show \ $ s \geq \sqrt{2} \, \cdot \left[ \frac{v \cdot w}{v + w} \right] $. \ We use
	      \begin{displaymath}   \label{proo8oooo7astg} 
	              \left( \begin{array}{c} 1 \\ 1 \end{array} \right) 
	              \ = \ \frac{1 - {\mathbf{ i}}}{2 \cdot v} \ \cdot
	                \left(\begin{array}{c} v \\ {\mathbf{ i}} \cdot v \end{array} \right) \ + \ 
	                \ \frac{1 + {\mathbf{ i}}}{2 \cdot w} \cdot
	                \left(  \begin{array}{c} w \\ -{\mathbf{ i}} \cdot w  \end{array}  \right) \ .
	      \end{displaymath}    
	           The other inequality needs \ $ \left( \begin{array}{c} 1 \\ {\mathbf{ i}} \end{array} \right) 
	              \ = \ \frac{1 + {\mathbf{ i}}}{2 \cdot s} \ \cdot \
	                \left(\begin{array}{c} s \\ s \end{array} \right) \ + \ 
	                \ \frac{1 - {\mathbf{ i}}}{2 \cdot t} \cdot
	                \left(  \begin{array}{c} t \\ -t  \end{array}  \right) $ \ .     
	      	   \end{proof}
		  { \textbf{ Acknowledgements: }}     We wish to thank Prof. Dr. Eberhard  Oeljeklaus 
		  for a careful reading of the paper and some helpful calculations, and Dr. Malte von Arnim, 
 		  who found the substitutions in the proof of inequality  $\mathsf{ (D)}$ in Lemma \ref{last lemma}.  

  \end{document}